\theoremstyle{plain}
\newtheorem{theorem}{Theorem}[section]
\newtheorem{corollary}[theorem]{Corollary}
\newtheorem{lemma}[theorem]{Lemma}
\theoremstyle{definition}
\newtheorem{definition}[theorem]{Definition}
\newtheorem{example}[theorem]{Example}
\newtheorem{remark}[theorem]{Remark}
\newcommand{\R}{\mathbb{R}}
\newcommand{\Z}{\mathbb{Z}}
\newcommand{\C}{\mathbb{C}}
\newcommand{\PPP}{\mathbb{P}}
\newcommand{\diff}{\setminus}
\newcommand{\Hidden}[1]{}
\newcommand{\PP}{\mathcal{P}}
\newcommand{\M}{\tilde{M}}
\DeclareMathOperator{\Tr}{Tr}
\begin{document}

\tikzstyle{every node}=[circle,fill=black,inner sep=1pt]

\title{Pretty good quantum state transfer in asymmetric graphs via potential}
\author{Or Eisenberg\footnote{Department of Mathematics, Harvard University, Cambridge MA, oreisenberg@college.harvard.edu.  Supported in part by the Herchel Smith Fellowship.}~~~~~Mark Kempton\footnote{Center of Mathematical Sciences and Applications, Harvard University, Cambridge MA, mkempton@cmsa.fas.harvard.edu.  Supported in part by NSF ATD award DMS-1737873.}~~~~~Gabor Lippner\footnote{Department of Mathematics, Northeastern University, Boston MA, g.lippner@neu.edu}}
\date{}

\maketitle

\begin{abstract}
We construct infinite families of graphs in which pretty good state transfer can be induced by adding a potential to the nodes of the graph (i.e. adding a number to a diagonal entry of the adjacency matrix).  Indeed, we show that given any graph with a pair of cospectral nodes, a simple modification of the graph, along with a suitable potential, yields pretty good state transfer (i.e. asymptotically perfect state transfer) between the nodes.  This generalizes previous work, concerning graphs with an involution, to asymmetric graphs. 
\end{abstract}

%%%%%%%%%%%%%%
\section{Introduction}
%%%%%%%%%%%%%%

Transfer of quantum information with high fidelity through networks of locally coupled spin particles is an important problem in quantum information processing. Information can be considered as excitation in the network initiated at an input node, which then spreads according to the action of a Hamiltonian. The quality of the transfer depends on how strongly the excitation can then be concentrated at a given target node. The transfer is \emph{perfect} if there is a time $t$ at which the probability of the excitation being at the target node is 1. Initiated by Bose~\cite{Bose2003}, perfect state transfer has been extensively studied for various networks, both from the physical~\cite{kay2010} and the mathematical~\cite{godsil} point of view. It turns out that perfect state transfer is notoriously difficult to achieve. All known constructions involve very special networks and/or very special, highly non-uniform coupling strengths. In particular, it has been shown in~\cite{us} that for uniformly coupled chains of length at least four there can never be perfect state transfer between endpoints, not even in the presence of magnetic fields.

There is a somewhat less restrictive notion of a \emph{pretty good state transfer}, also referred to as ``almost perfect state transfer''. This requires the transfer probability to get arbitrarily close to 1 as time passes. While practically just as good as perfect state transfer, it is somewhat easier to achieve. The first examples of spin chains admitting pretty good state transfer appeared in~\cite{Vinet2012}. However, as demonstrated in~\cite{Godsil2012,Coutinho:PGST,Coutinho2016,vanBommel2016}, even pretty good state transfer is relatively rare in unmodulated spin chains with uniform couplings. 

In this paper we study pretty good state transfer in the single-excitation subspace of a spin network with $XX$ couplings, in the presence of a magnetic field. We will use graph theoretic terminology throughout the paper. We denote the network by $G$, the set of nodes (vertices) by $V(G)$ and the set of links (edges) by $E(G)$. The evolution of such a system is given by its Hamiltonian
\[ H_{XX} = \frac{1}{2}\sum_{ (i,j) \in E(G)} J_{ij} (X_i X_j + Y_i Y_j) + \sum_{i \in V(G)} Q_i \cdot Z_i ,\] where $X_i, Y_i, Z_i$ are the standard Pauli matrices, $J_{ij}$ denotes the strength of the $XX$ coupling between node $i$ and $j$, and the $Q_i$'s give the strength of the magnetic field yielding an energy potential at each node. 

It has been shown \cite{Bose2003,Christandl2004} that the restriction of this Hamiltonian to the single-excitation subspace is modeled by a continuous-time quantum walk on a graph with transition matrix, $U(t)$, given by
\[ U(t) = \exp(itA)
\]
where $A$ is the (possibly weighted) adjacency matrix of the graph. 

\begin{definition}
Let $G$ be a graph with vertices $u$ and $v$.
\begin{enumerate}%[(i)]
\item We say that $G$ admits \emph{perfect state transfer} (PST) from $u$ to $v$ if there is some time $t>0$ such that
\[
|U(t)_{u,v}| = 1.
\]
\item We say  $G$ admits \emph{pretty good state transfer} (PGST) from $u$ to $v$ if, for any $\epsilon>0$, there is a time $t>0$ such that
\[
|U(t)_{u,v}| > 1-\epsilon.
\]
\end{enumerate}
\end{definition} 

Our primary focus in this paper will be the effect of adding a potential induced by a magnetic field (the $Q_i$ above).  Previous work in \cite{invol} showed that in graphs with an involutional symmetry, one can often induce pretty good state transfer between a pair of nodes by appropriately choosing a potential on the vertex set. In \cite{srg}, it is shown that a potential can induce pretty good state transfer in strongly regular graphs as well.  The contribution of this paper is to show how to construct asymmetric, non-regular graphs that admit PGST between a pair of nodes $u,v$ if a suitably chosen potential is added to the adjacency matrix at $u$ and $v$.  The novelty of our constructions is that we do not require any symmetry or regularity in the graph.  In addition, our results apply to arbitrary real symmetric matrices (not just adjacency matrices).  We note that PST has been exhibited in asymmetric simple unweighted graphs in \cite{Tamon2011}.

A necessary condition for both PST and PGST between vertices $u$ and $v$ of a graph is that $u$ and $v$ must be \emph{cospectral} (see \cite{godsil,Coutinho:PGST}), that is $G\diff u$ and $G\diff v$ have the same spectrum.  One motivation for our previous work is that symmetry in a graph always naturally leads to cospectral vertices.  It also holds that pairs of vertices in strongly regular graphs are cospectral.  However, cospectral pairs can arise without any symmetry or regularity conditions.  Two relatively small examples are shown in Figure \ref{fig:cosp}.
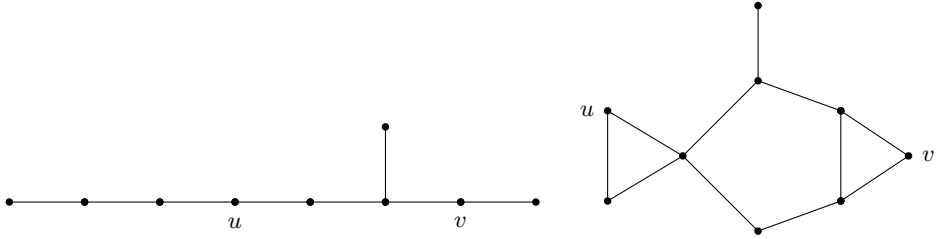
\begin{figure}
\begin{center}
\begin{tikzpicture}
\draw \foreach \x in {0,1,2,3,4,5,6}
{
(\x,0)node{}--(\x+1,0)node{}
}
(5,0)--(5,1)node{}
(3,-.1)node[fill=white, below]{\small $u$}
(6,-.1)node[fill=white, below]{\small $v$};
\end{tikzpicture}~~~
\begin{tikzpicture}
\draw (0,0)node{}--(-1,.6)node{}--(-1,-.6)node{}--(0,0)--(1,1)node{}--(2.1,.6)node{}--(2.1,-.6)node{}--(1,-1)node{}--(0,0)
(1,1)--(1,2)node{}
(2.1,.6)node{}--(3,0)node{}--(2.1,-.6)node{}
(-1.1,.6)node[fill=white, left]{\small $u$}
(3.1,0)node[fill=white, right]{\small $v$};
\end{tikzpicture}
\end{center}
\caption{\label{fig:cosp} Asymmetric, irregular graphs with cospectral nodes.  In each graph, the cospectral vertices are labeled $u$ and $v$.}
\end{figure}

Our constructions come in two types, based on the following two observations concerning cospectral vertices.  First, in a graph with an equitable partition (defined in Section \ref{sec:eq}) with a part containing exactly two vertices, those two vertices are cospectral (see Figure \ref{fig:eq} for an example).  Second, given two graphs with a cospectral pair, the vertices remain cospectral in the graph obtained by ``gluing" the two graphs together along those vertices (see Lemma \ref{lem:glue} below).  We are able to show that given any graph with a pair of cospectral vertices, a simple modification of the graph, together with an appropriately chosen potential on the vertex set, yields PGST between those vertices.  See Corollary \ref{cor:equit}, and Theorems \ref{thm:gluing}, \ref{thm:glue_pot}, and \ref{thm:change_trace} below for the precise details.

\begin{figure}
\begin{center}
\begin{tikzpicture}
\draw \foreach \x in {22.5,67.5,...,337.5}
{
(\x:2)node{}--(\x+45:2)node{}
}
\foreach \x in {22.5,67.5,112.5,247.5}{
(.5,.5)node{}--(\x:2)
}
\foreach \x in {157.5,202.5,292.5,337.5}{
(-.5,-.5)node{}--(\x:2)
}
(.6,.5)node[fill=white, below]{\small $u$}
(-.4,-.5)node[fill=white, above]{\small $v$};
\end{tikzpicture}
\end{center}
\caption{\label{fig:eq} A graph with an equitable partition with a part of size 2 (vertices $u$ and $v$), and hence a cospectral pair (but no involution swapping $u$ and $v$).}
\end{figure}
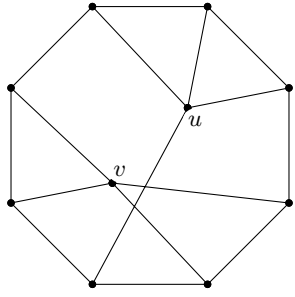

The key tool in our analysis is Theorem \ref{thm:tr/deg} below, which takes advantage of the fact that the characteristic polynomial of the adjacency matrix for a graph with cospectral nodes has a factorization.  We give a simple, efficiently computable condition on the factors that implies PGST. Note, however, that the converse of Theorem \ref{thm:tr/deg} does not hold in general.  Another critical piece in our proofs, of independent interest, is Lemma \ref{lem:strongly_cospectral_perturbation}, which shows that adding a transcendental potential to a pair of cospectral nodes actually makes them \emph{strongly} cospectral (see Section \ref{sec:prelim} for the definition).

%%%%%%%%%%%%%%
\section{Preliminaries}\label{sec:prelim}
%%%%%%%%%%%%%%

Let $M$ be a symmetric matrix with entries in a field $\mathcal{F}$. We use $\phi_M \in \mathcal{F}[t]$ to denote its characteristic polynomial. The rows and columns of $M$ will typically be indexed by the nodes of a finite graph. We will use $V(M)$ to denote the set of row/column indices of $M$, so we can think of $M \in \R^{V(M)\times V(M)}$. If $S \subset V(M)$, we write $M_S$ for the symmetric submatrix obtained from $M$ by \emph{removing} the rows and columns indexed by $S$.

\begin{definition}\label{def:wmz} For any vector $z$, let $W(M,z) = \langle z, Mz, M^2 z,\dots \rangle$ denote the $M$-invariant subspace generated by $z$. Let us denote by $\rho_z = \rho_{z,M} \in \mathcal{F}[t]$ the minimal polynomial of $M$ relative to $z$, that is, the smallest degree polynomial such that $\rho_z(M)z = 0$. It is well-known that $\rho_z$ divides the usual minimal polynomial of $M$ and that the degree of $\rho_z$ equals the dimension of $W(M,z)$. 
\end{definition}

%%%
\subsection{Cospectral nodes}
%%%

\begin{definition} Let $M$ be a symmetric matrix. Two indices $u, v \in V(M)$ are \emph{cospectral} if $\phi(M_u) = \phi(M_v)$.
\end{definition}

\begin{lemma}[Theorem 3.1 of \cite{godsil_smith_2017}]
 Let $M$ be a symmetric matrix, and let $u,v \in V(M)$. Let $M = \sum_{\lambda}\lambda E_{\lambda}$ be the spectral decomposition of $M$. Here $E_\lambda$ denotes the projections onto the eigenspaces of $M$ corresponding to the eigenvalue $\lambda$. We denote the characteristic vectors of $u$ and $v$ by $e_u,e_v$  respectively. 
 The following are equivalent:
\begin{enumerate}
\item  $u$ and $v$ are cospectral.
\item $(E_\lambda)_{u,u} = (E_\lambda)_{v,v}$ for all $\lambda$. 
\item $M^k(u,u) = M^k(v,v)$ for all $k$.
\item $W(M,e_u+e_v)$ is orthogonal to $W(M,e_u - e_v)$. 
\end{enumerate}
\end{lemma}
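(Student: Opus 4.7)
The plan is to establish the four-way equivalence via the chain (1) $\iff$ (2) $\iff$ (3) $\iff$ (4), using the diagonal entry of the resolvent $(tI-M)^{-1}$ as the common thread. For (1) $\iff$ (2), the key input is the identity
\[
\frac{\phi(M_u)(t)}{\phi(M)(t)} = \bigl((tI-M)^{-1}\bigr)_{u,u} = \sum_\lambda \frac{(E_\lambda)_{u,u}}{t-\lambda}.
\]
The first equality is the cofactor formula for the inverse (the $(u,u)$ diagonal cofactor of $tI-M$ is precisely $\phi(M_u)(t)$, with no sign change), and the second follows by applying the spectral decomposition to the resolvent. Since the $\lambda$'s are distinct, uniqueness of partial fractions makes equality of the residues $(E_\lambda)_{u,u} = (E_\lambda)_{v,v}$ for all $\lambda$ equivalent to equality of these rational functions, hence to $\phi(M_u) = \phi(M_v)$.

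For (2) $\iff$ (3), I would use $M^k = \sum_\lambda \lambda^k E_\lambda$ to write
\[
M^k(u,u) - M^k(v,v) = \sum_\lambda \lambda^k \bigl[(E_\lambda)_{u,u} - (E_\lambda)_{v,v}\bigr].
\]
The forward implication is immediate. For the reverse, set $c_\lambda = (E_\lambda)_{u,u} - (E_\lambda)_{v,v}$ and restrict to $k = 0, 1, \ldots, N-1$, where $N$ is the number of distinct eigenvalues; this yields a Vandermonde linear system in the $c_\lambda$ with nonzero determinant, forcing every $c_\lambda = 0$.

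Finally, for (3) $\iff$ (4), set $z_\pm = e_u \pm e_v$. A direct expansion, using that $M^k$ is symmetric, gives $\langle z_+, M^k z_- \rangle = M^k(u,u) - M^k(v,v)$. Orthogonality of $W(M, z_+)$ and $W(M, z_-)$ is equivalent to $\langle z_+, M^k z_- \rangle = 0$ for all $k \geq 0$ (since the iterates $M^k z_\pm$ span the respective invariant subspaces), which is precisely (3). The only nontrivial ingredient is the cofactor/resolvent identity underlying Step 1; once that is in hand, the rest is elementary linear algebra.
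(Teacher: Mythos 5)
The paper states this as Theorem 3.1 of Godsil--Smith and gives no proof of its own, so there is nothing internal to compare against; I will just assess the proposal on its merits. Your argument is correct. The cofactor identity $\bigl((tI-M)^{-1}\bigr)_{u,u}=\phi(M_u)/\phi(M)$ and the spectral-decomposition expansion $\bigl((tI-M)^{-1}\bigr)_{u,u}=\sum_\lambda (E_\lambda)_{u,u}/(t-\lambda)$ are both standard, and the uniqueness of the partial-fraction decomposition (with the simple-pole structure coming from diagonalizability of $M$) does give (1) $\iff$ (2). The Vandermonde argument for (2) $\iff$ (3) is sound because the eigenvalues are distinct by construction. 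For (3) $\iff$ (4), the cancellation of the cross terms $M^k(u,v)=M^k(v,u)$ via symmetry of $M^k$, together with the observation that $\langle M^j z_+, M^k z_-\rangle=\langle z_+, M^{j+k}z_-\rangle$ so that orthogonality of the two Krylov subspaces reduces to $\langle z_+, M^m z_-\rangle=0$ for all $m\geq 0$, is exactly right. This resolvent/walk-generating-function route is the standard one in the cited Godsil--Smith reference, so there is no substantive divergence to report.
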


\begin{definition}\label{def:ppm}
We define $P_+$ to be the minimal polynomial of $M$ relative to $e_u + e_v$, and $P_-$ to be the minimal polynomial of $M$ relative to $e_u - e_v$. 
\end{definition}

\begin{lemma}\label{lem:cospectral_decomposition}
Given a symmetric matrix $M$ and cospectral indices $u,v \in V(M)$, the characteristic polynomial of $M$ decomposes as
\[ \phi_M = P_+ \cdot P_- \cdot P_0, \]
where $P_+$ and $P_-$ have no multiple roots, and there is an orthonormal basis of eigenvectors of $M$ such that:
\begin{enumerate}
\item for each root $\lambda$ of $P_+$ the basis contains a unique eigenvector $\varphi$ with eigenvalue $\lambda$ and $\varphi(u) = \varphi(v) \neq 0$,
\item for each root $\lambda$ of $P_-$ the basis contains a unique eigenvector $\varphi$ with eigenvalue $\lambda$ and $\varphi(u) = - \varphi(v) \neq 0$,
\item for each root $\lambda$ of $P_0$ with multiplicity $k$ the basis contains exactly $k$ eigenvectors with eigenvalue $\lambda$ all of which vanish on both $u$ and $v$.
\end{enumerate}
In particular the degree of $P_{\pm}$ is the same as the dimension of the space $W(M,e_u \pm e_v)$. 
\end{lemma}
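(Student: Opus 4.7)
My plan is to exploit the decomposition of $\R^{V(M)}$ into three pairwise orthogonal, $M$-invariant subspaces coming directly from the preceding lemma. Set $W_\pm := W(M, e_u \pm e_v)$ and $W_0 := (W_+ \oplus W_-)^\perp$. Condition (4) of the preceding lemma gives $W_+ \perp W_-$, so their sum is direct; since $M$ is symmetric and $W_+ \oplus W_-$ is $M$-invariant by construction, its orthogonal complement $W_0$ is $M$-invariant as well.

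Next I analyze $M|_{W_+}$, the argument for $M|_{W_-}$ being symmetric. By Definition \ref{def:wmz} and Definition \ref{def:ppm}, $P_+$ is the minimal polynomial of $M|_{W_+}$ and $\deg P_+ = \dim W_+$. The spectral theorem applied to the symmetric operator $M|_{W_+}$ forces its minimal polynomial to be squarefree, and having full degree forces $P_+$ to coincide with the characteristic polynomial of $M|_{W_+}$. In particular each root $\lambda$ of $P_+$ contributes a one-dimensional eigenspace inside $W_+$, producing a unique (up to sign) unit eigenvector $\varphi_\lambda$ of $M$ in $W_+$.

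I then verify $\varphi_\lambda(u) = \varphi_\lambda(v) \neq 0$. The equality is immediate once I observe that each vector $M^k(e_u+e_v)$ has the same value at $u$ and at $v$: indeed these values are $M^k_{u,u} + M^k_{u,v}$ and $M^k_{v,u} + M^k_{v,v}$, which agree by part (3) of the preceding lemma. Since the $M^k(e_u+e_v)$ span $W_+$, the equality passes to $\varphi_\lambda$. For the nonvanishing I note $\langle \varphi_\lambda, e_u+e_v\rangle = 2\varphi_\lambda(u)$; if this vanished, then $e_u+e_v$ would lie in the span of the other eigenvectors of $M|_{W_+}$, so the whole $M$-cyclic subspace $W_+$ would, contradicting $\varphi_\lambda \in W_+$.

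Finally, for $W_0$: any $\varphi \in W_0$ is orthogonal to both $e_u+e_v$ and $e_u-e_v$, so $\varphi(u) = \varphi(v) = 0$, and diagonalizing the symmetric operator $M|_{W_0}$ in any orthonormal eigenbasis provides the vectors in part (3). Setting $P_0 := \phi_{M|_{W_0}}$ and block-diagonalizing $M$ with respect to $W_+ \oplus W_- \oplus W_0$ yields the factorization $\phi_M = P_+\cdot P_-\cdot P_0$, and the final degree statement is just the identity $\deg P_\pm = \dim W(M,e_u\pm e_v)$ recorded in Definition \ref{def:wmz}. The only subtle step is the nonvanishing argument in the third paragraph; everything else is routine spectral theory combined with the preceding lemma.
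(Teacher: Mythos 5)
Your proof is correct and is in essence the same as the paper's: both decompose $\R^{V(M)}$ into the mutually orthogonal $M$-invariant pieces $W_+$, $W_-$, and their complement, and identify the roots of $P_\pm$ with the eigenvalues appearing in $W_\pm$. The paper phrases this via spectral projections, taking $\varphi = E_\lambda(e_u+e_v)$ directly as the distinguished eigenvector and leaving the nonvanishing $\varphi(u)\neq 0$ implicit (it follows from $\langle E_\lambda(e_u+e_v), e_u+e_v\rangle = \lVert E_\lambda(e_u+e_v)\rVert^2 > 0$), whereas you diagonalize the restriction $M|_{W_+}$ and argue nonvanishing from cyclicity of $W_+$; these are the same idea presented in two equivalent languages.
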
 

\begin{proof}
Since $M$ is diagonalizable, its minimal polynomial doesn't have multiple roots, and hence neither does $P_+$ nor $P_-$. The roots of $P_+$ are exactly those eigenvalues $\lambda$ for which $E_\lambda(e_u+e_v) \neq 0$, and for such $\lambda$ the eigenvector $\varphi = E_\lambda(e_u+e_v)$ satisfies that $\varphi(u) = \varphi(v)$. Similarly for $P_-$. By cospectrality of $u$ and $v$, the eigenvectors obtained for $P_+$ and for $P_-$ are pairwise orthogonal. Finally, extending to an orthogonal basis for $M$, it is clear that each remaining eigenvector satisfies $\varphi(u)= \varphi(v) = 0$.
\end{proof}

\begin{remark}
Since the coefficients of $P_+$ give the unique linear dependency among $e_u + e_v, M(e_u+e_v), \dots, M^k(e_u+e_v)$, they belong to the same field as the entries of $M$.  The same is true for $P_-$, and thus for $P_0$.
\end{remark}

\begin{definition}
The indices $u,v \in V(M)$ are \emph{strongly cospectral} if $\varphi(u) = \pm \varphi(v)$ for every eigenvector $\varphi$ of $M$.
\end{definition}

\begin{lemma} The following are equivalent:
\begin{enumerate}
\item $u$ and $v$ are strongly cospectral.
\item $u$ and $v$ are cospectral, and $P_+$ and $P_-$ do not have any common roots.
\item $E_\lambda e_u = \pm E_\lambda e_v$ for all $\lambda$.
\end{enumerate}
\end{lemma}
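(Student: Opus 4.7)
The plan is to establish the three implications (1) $\Rightarrow$ (3) $\Rightarrow$ (2) $\Rightarrow$ (1), using Lemma \ref{lem:cospectral_decomposition} as the main structural tool.

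For (1) $\Rightarrow$ (3), fix an eigenvalue $\lambda$ and consider the eigenspace $V_\lambda$. Define the subspaces
\[ V_\lambda^+ = \{\varphi \in V_\lambda : \varphi(u) = \varphi(v)\}, \qquad V_\lambda^- = \{\varphi \in V_\lambda : \varphi(u) = -\varphi(v)\}. \]
Strong cospectrality means $V_\lambda = V_\lambda^+ \cup V_\lambda^-$. Since a vector space over $\mathcal{F}$ is never the union of two proper subspaces, we must have $V_\lambda = V_\lambda^+$ or $V_\lambda = V_\lambda^-$. Expanding in an orthonormal basis $\{\varphi_i\}$ of $V_\lambda$ gives $E_\lambda e_u = \sum_i \varphi_i(u)\varphi_i$ and $E_\lambda e_v = \sum_i \varphi_i(v)\varphi_i$, so the uniform sign relation on the $\varphi_i$ forces $E_\lambda e_u = \pm E_\lambda e_v$.

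For (3) $\Rightarrow$ (2), cospectrality is immediate since $(E_\lambda)_{u,u} = \lVert E_\lambda e_u \rVert^2 = \lVert E_\lambda e_v \rVert^2 = (E_\lambda)_{v,v}$ for all $\lambda$, matching condition (2) of the preceding lemma. If $P_+$ and $P_-$ shared a root $\lambda$, then $E_\lambda(e_u+e_v) \neq 0$ and $E_\lambda(e_u - e_v) \neq 0$, which rules out both $E_\lambda e_u = E_\lambda e_v$ and $E_\lambda e_u = -E_\lambda e_v$, contradicting (3). So $P_+$ and $P_-$ have no common roots.

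For (2) $\Rightarrow$ (1), invoke Lemma \ref{lem:cospectral_decomposition} to get an orthonormal basis of eigenvectors partitioned into three types corresponding to $P_+, P_-, P_0$. The disjointness of the roots of $P_+$ and $P_-$ ensures that each eigenspace $V_\lambda$ is spanned by basis vectors of a single type: either all satisfy $\varphi(u) = \varphi(v)$, or all satisfy $\varphi(u) = -\varphi(v)$, or all vanish at $u$ and $v$ (with possibly some $P_0$-type vectors mixed into the first two cases, but those vanish at $u,v$ and so do not disturb the sign relation). Any eigenvector is a linear combination of basis vectors within one $V_\lambda$, and therefore inherits $\varphi(u) = \pm \varphi(v)$.

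The main obstacle is the step (1) $\Rightarrow$ (3): naively, knowing a pointwise condition on every eigenvector does not automatically translate into the stronger statement that $E_\lambda e_u$ and $E_\lambda e_v$ agree as \emph{vectors}. The trick is the observation that the set of $\varphi$ satisfying $\varphi(u) = \pm \varphi(v)$ is a union of two linear subspaces, so forcing all of $V_\lambda$ into this union collapses $V_\lambda$ to one of them. The remaining implications are then routine projection calculations combined with Lemma \ref{lem:cospectral_decomposition}.
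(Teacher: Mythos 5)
The paper states this lemma without proof, so there is nothing to compare against directly; the burden is simply on whether your argument is correct, and it is.

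Your cycle of implications $(1)\Rightarrow(3)\Rightarrow(2)\Rightarrow(1)$ is sound, and you have correctly isolated the only step that requires an actual idea: the passage from the pointwise condition ``every eigenvector $\varphi$ satisfies $\varphi(u)=\pm\varphi(v)$'' to the vector identity $E_\lambda e_u = \pm E_\lambda e_v$. The observation that $V_\lambda$ is forced into the union of the two subspaces
\[
V_\lambda^+=\{\varphi\in V_\lambda:\varphi(u)=\varphi(v)\},\qquad V_\lambda^-=\{\varphi\in V_\lambda:\varphi(u)=-\varphi(v)\},
\]
and that a vector space can never be the union of two proper subspaces, is exactly the right mechanism; once $V_\lambda$ collapses to one side the projection formula $E_\lambda e_u=\sum_i\varphi_i(u)\varphi_i$ closes the argument. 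For $(3)\Rightarrow(2)$ your use of $(E_\lambda)_{u,u}=\lVert E_\lambda e_u\rVert^2$ (which relies on $E_\lambda$ being a symmetric idempotent, worth flagging explicitly) recovers cospectrality via the spectral-decomposition criterion, and the characterization of the roots of $P_\pm$ as precisely those $\lambda$ with $E_\lambda(e_u\pm e_v)\neq 0$ handles the disjointness claim. For $(2)\Rightarrow(1)$ the appeal to Lemma~\ref{lem:cospectral_decomposition} is correct: with no shared roots, each $V_\lambda$ is spanned by at most one ``signed'' eigenvector together with $P_0$-type vectors vanishing at both $u$ and $v$, so the sign relation propagates to every eigenvector in $V_\lambda$. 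No gaps.
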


%%%
\subsection{Pretty good state transfer}
%%%

The discrete Schr\"odinger equation, for an $n\times n$ matrix $M$, is given by
\[ \partial_t \psi_t = i M \psi_t,\]
where $\psi_t \in \C^n$. The solution of this equation can be written in the form 
\[ \psi_t = e^{itM}\psi_0.\]

\begin{definition}
$M$ has PGST from $u$ to $v$ if $\psi_0 = e_u$ implies that $\limsup_{t\to \infty} |\psi_t(v)| = 1$, or equivalently if $\limsup_{t\to \infty} | e^{itM}(u,v)| =1$.
\end{definition}

The following is a characterization of PGST (see Theorem 2 in~\cite{Coutinho:PGST}).

\begin{lemma}\label{lem:eig}
Let $u,v \in V(M)$ for the symmetric matrix $M$.  Then pretty good state transfer from $u$ to $v$ occurs if and only if the following two conditions are satisfied:
\begin{enumerate}
\item The indices $u$ and $v$ are strongly cospectral.
\item Let $\{\lambda_i\}$ be the roots of $P_+$, and $\{\mu_j\}$ the roots of $P_-$. Then for any choice of integers $\ell_i$, $m_j$ such that 
\begin{align*}
\sum_i \ell_i\lambda_i +\sum_j m_j\mu_j = 0\\
\sum_i\ell_i +\sum_j m_j =0,
\end{align*}
we have
\[
\sum_i m_i \text{ is even.}
\]
\end{enumerate}
\end{lemma}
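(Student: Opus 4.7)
The plan is to expand $e^{itM}(u,v)$ via the spectral decomposition, use Lemma \ref{lem:cospectral_decomposition} to bound its magnitude by $1$, and then apply Kronecker's theorem to decide when the bound is saturated in the limit.

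First I would write $e^{itM}(u,v) = \sum_\lambda e^{it\lambda}(E_\lambda)_{u,v}$ and compute $(E_\lambda)_{u,v}$ using the orthonormal basis from Lemma \ref{lem:cospectral_decomposition}: the contribution is $\varphi(u)^2 > 0$ for $\lambda$ a root of $P_+$ only, $-\varphi(u)^2 < 0$ for a root of $P_-$ only, and $0$ for a root of $P_0$ only. If $u,v$ are cospectral but not strongly cospectral, a shared root $\lambda$ of $P_+$ and $P_-$ yields $(E_\lambda)_{u,v} = a^2 - b^2$ with both $a,b \ne 0$, while $(E_\lambda)_{u,u} = a^2+b^2$, so $|(E_\lambda)_{u,v}| < (E_\lambda)_{u,u}$ strictly. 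Hence $|e^{itM}(u,v)| \le \sum_\lambda |(E_\lambda)_{u,v}| \le \sum_\lambda (E_\lambda)_{u,u} = 1$, with the first inequality strict (uniformly in $t$) whenever strong cospectrality fails, ruling out PGST. The case where $u,v$ are not even cospectral is handled by a standard Cauchy--Schwarz argument relating diagonal and off-diagonal entries of the $E_\lambda$.

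Assume now strong cospectrality, and set $c_i = (E_{\lambda_i})_{u,v} > 0$ for the roots $\lambda_1,\dots,\lambda_p$ of $P_+$ and $d_j = -(E_{\mu_j})_{u,v} > 0$ for the roots $\mu_1,\dots,\mu_q$ of $P_-$; these sum to $1$. Then $|e^{itM}(u,v)| = |\sum_i c_i e^{it\lambda_i} - \sum_j d_j e^{it\mu_j}|$ attains value $1$ precisely when all summands share a common argument, equivalently when there exists $\theta \in \R$ with $e^{it\lambda_i} = e^{i\theta}$ for every $i$ and $e^{it\mu_j} = e^{i(\theta+\pi)}$ for every $j$. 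PGST is therefore equivalent to the target point $(0,\dots,0,\pi,\dots,\pi) \in (\R/2\pi\Z)^{p+q}$ lying in the closure of the image of the continuous homomorphism $g:\R^2 \to (\R/2\pi\Z)^{p+q}$, $g(t,\theta) = ((t\lambda_i - \theta)_i, (t\mu_j - \theta)_j)$.

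Finally I would apply Pontryagin duality (equivalently, Kronecker's simultaneous approximation theorem). A character $\chi_{(\ell_i),(m_j)}$ of $(\R/2\pi\Z)^{p+q}$ vanishes on the image of $g$ iff $\sum_i \ell_i(t\lambda_i - \theta) + \sum_j m_j(t\mu_j - \theta) \equiv 0 \pmod{2\pi}$ for all $t,\theta \in \R$; matching coefficients of $t$ and $\theta$ gives exactly the two relations $\sum_i \ell_i \lambda_i + \sum_j m_j \mu_j = 0$ and $\sum_i \ell_i + \sum_j m_j = 0$ from the lemma. The target is in the closure iff every such character evaluates to $0$ on it, which unpacks as $\sum_j m_j \pi \equiv 0 \pmod{2\pi}$, i.e., $\sum_j m_j$ is even. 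The main technical step to nail down is this Kronecker/duality argument, in particular translating between the annihilator description and the elementary Diophantine formulation, and verifying that restricting to $t>0$ (as in the definition of PGST) does not shrink the relevant closure.
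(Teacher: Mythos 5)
The paper does not prove this lemma at all: it cites it as Theorem~2 of~\cite{Coutinho:PGST}, so there is no ``paper proof'' to compare against. Your blind proof is the standard Kronecker-theorem argument and, as far as I can tell, it is correct and likely very close to what is actually in the cited reference. The spectral decomposition step, the Cauchy--Schwarz analysis showing $|e^{itM}(u,v)| \le \sum_\lambda |(E_\lambda)_{u,v}| \le 1$ with the first bound strictly below $1$ uniformly in $t$ unless $u,v$ are strongly cospectral, and the reduction of PGST to a density question on the torus are all sound. The two points you flag as ``to nail down'' are genuinely the technical content but both are routine. For the duality step: $g(\R^2)$ is a subgroup of $(\R/2\pi\Z)^{p+q}$, its closure $H$ is a closed subgroup, and a point lies in $H$ iff it is annihilated by every character of the torus that annihilates $H$ (equivalently, that vanishes on $g(\R^2)$); characters are integer vectors $(\ell_i, m_j)$, and vanishing on the image of $g$ forces, by matching coefficients of $t$ and $\theta$, exactly the two integer relations in the statement. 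Evaluating such a character at the target $(0,\dots,0,\pi,\dots,\pi)$ gives $\pi\sum_j m_j \pmod{2\pi}$, hence the parity condition (note the paper's ``$\sum_i m_i$'' is a typo for $\sum_j m_j$, which equals $\sum_i \ell_i$ mod $2$ by the second relation). For the $t>0$ issue: $g$ is continuous, so $\overline{g(\R_{>0}\times\R)} = \overline{g(\R_{\ge 0}\times\R)}$, and the latter, being the closure of a subsemigroup of a compact group, is itself a subgroup; it therefore contains $-g(\R_{\ge 0}\times\R) = g(\R_{\le 0}\times\R)$ and hence all of $g(\R^2)$, so its closure coincides with $\overline{g(\R^2)}$. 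Thus restricting to $t>0$ (or $t\to\infty$) loses nothing.
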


Note that the first conditions could be weakened to just cospectral, since the second condition implies that $P_+$ and $P_-$ do not share any roots, so this implies strongly cospectral if the nodes are cospectral. 

%\begin{remark}
%We still get a sufficient condition for PGST if $P_+$ and $P_-$ are replaced by polynomial multiples of $P_+$ and $P_-$ respectively. This only increases the sets of eigenvalues involved in the second condition, thereby increasing the possible choices of integers $\ell_i, m_j$, thus making the condition harder to satisfy.
%\end{remark}

The following theorem generalizes a result from \cite{invol} and a lemma from \cite{srg}.
\begin{theorem}\label{thm:tr/deg}
Let $M$ be a symmetric matrix with strongly cospectral indices $u, v \in V(M)$, and assume that $P_+$ and $P_-$ are irreducible polynomials.  Then if 
\[
\frac{\Tr(P_+)}{deg(P_+)}\neq\frac{\Tr(P_-)}{deg(P_-)},
\]
where $\Tr$ denotes the trace (i.e. the sum of roots) of a polynomial, then there is PGST from $u$ to $v$.
\end{theorem}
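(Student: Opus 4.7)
The plan is to verify the two conditions of Lemma \ref{lem:eig}. Strong cospectrality is hypothesized, so only the integer combination condition remains: given $\ell_i, m_j \in \Z$ satisfying $\sum_i \ell_i \lambda_i + \sum_j m_j \mu_j = 0$ and $\sum_i \ell_i + \sum_j m_j = 0$, we must show that $\sum_j m_j$ is even. In fact, we will prove the stronger statement that it vanishes.

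The central idea is Galois averaging. Assuming $M$ has rational entries, the remark following Lemma \ref{lem:cospectral_decomposition} guarantees $P_\pm \in \Q[t]$; let $K$ be the splitting field of $P_+ P_-$ over $\Q$ and set $\Gamma = \mathrm{Gal}(K/\Q)$. Because $P_+$ and $P_-$ are irreducible, $\Gamma$ acts transitively on $\{\lambda_i\}$ and on $\{\mu_j\}$. By the orbit--stabilizer theorem, each root of $P_+$ appears exactly $|\Gamma|/\deg(P_+)$ times in the multiset $\{\sigma(\lambda_i) : \sigma \in \Gamma\}$ for any fixed $i$, so
\[
\sum_{\sigma \in \Gamma} \sigma(\lambda_i) \;=\; \frac{|\Gamma|}{\deg(P_+)} \cdot \Tr(P_+),
\]
and the analogous identity holds for each $\mu_j$.

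Applying every $\sigma \in \Gamma$ to the relation $\sum_i \ell_i \lambda_i + \sum_j m_j \mu_j = 0$, summing over $\Gamma$, and dividing by $|\Gamma|$, the displayed orbit sums collapse to yield the rational identity
\[
\left(\sum_i \ell_i\right) \frac{\Tr(P_+)}{\deg(P_+)} \;+\; \left(\sum_j m_j\right) \frac{\Tr(P_-)}{\deg(P_-)} \;=\; 0.
\]
Substituting $\sum_i \ell_i = -\sum_j m_j$ from the second constraint and invoking the hypothesis $\Tr(P_+)/\deg(P_+) \neq \Tr(P_-)/\deg(P_-)$ forces $\sum_j m_j = 0$, which is certainly even. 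Both conditions of Lemma \ref{lem:eig} are then satisfied, so PGST from $u$ to $v$ occurs.

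The main (and essentially only) obstacle is reducing an $\R$-linear dependence among algebraic numbers to a single scalar identity; this is precisely where irreducibility of $P_\pm$ is needed, because transitivity of the Galois action on each root set is what makes the orbit sums collapse into trace/degree ratios. Once this reduction is achieved, the conclusion is immediate arithmetic.
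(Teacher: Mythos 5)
Your proposal is correct and follows essentially the same route as the paper: both reduce to the arithmetic condition of Lemma~\ref{lem:eig} and then use Galois averaging---transitivity of the Galois action on the roots of each irreducible factor---to collapse the integer relation $\sum_i \ell_i\lambda_i + \sum_j m_j\mu_j = 0$ into the rational identity $\bigl(\sum_i\ell_i\bigr)\Tr(P_+)/\deg(P_+) + \bigl(\sum_j m_j\bigr)\Tr(P_-)/\deg(P_-)=0$, from which the distinct trace/degree ratios force both coefficient sums to vanish. The only cosmetic difference is that you average over the full Galois group of the compositum directly, whereas the paper phrases the same computation via the field trace and the tower law $\Tr_{K/F}=\Tr_{L/F}\circ\Tr_{K/L}$; also the paper works over a general subfield $\mathcal{F}\leq\R$ rather than $\Q$, but the argument is unchanged.
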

\begin{proof}
Our proof uses a technique from \cite{invol}.

Suppose we have integers $\ell_i$, $m_j$ satisfying \begin{align*}
\sum_i \ell_i\lambda_i +\sum_j m_j\mu_j = 0\\
\sum_i\ell_i +\sum_j m_j =0.
\end{align*}
To use Lemma \ref{lem:eig}, we wish to show 
$\sum_i \ell_i $
is even.

We will use a tool from Galois theory called the \emph{field trace} of a field extension.  For a Galois field extension $K$ of $F$, we define $\Tr_{K/F}: K \rightarrow F$ by 
\[
\Tr_{K/F}(\alpha) = \sum_{g\in Gal(K/F)}g(\alpha).
\]  The field trace is the trace of the linear map taking $x\mapsto \alpha x$. In Lemma \ref{lem:field_trace} of the Appendix we record a few basic facts about the field trace that we will use.

Now, let $F$ be the base field (the field containing all the entries of $M$), let $L/F$ be the splitting field for $P_+$, $J/F$ the splitting field for $P_-$, and $K/F$ the smallest field extension containing both $L$ and $J$.  Let us denote $r = deg(P_+)$ and $s= deg(P_-)$.  Since $P_+$ and $P_-$ are irreducible, then $L$ and $J$ are Galois extensions of $F$. Let us examine the field trace of the individual roots of $P_+$ and $P_-$.  We have 
\[
\Tr_{L/F}(\lambda_i) = \sum_{g\in Gal(K/F)}g(\lambda_i)
\]
and since $L$ is a Galois extension, the group acts transitively on the roots of $P_+$, so each of the $\lambda_k$'s shows in this sum, and each will appear $|Gal(L/F)|/r$ times.  Thus
\[
\Tr_{L/F}(\lambda_i) = \frac{[L:F]}{r}\sum_k\lambda_k.
\]
Note further that $\sum_k\lambda_k=\Tr(P_+)$, so we have shown
\[
\Tr_{L/F}(\lambda_i) = \frac{[L:F]}{r}(\Tr(P_+))
\]
for any $i$. In a similar way, by examining $P_-$ we obtain
\[
\Tr_{J/F}(\mu_j) = \frac{[J:F]}{s}(\Tr(P_-))
\]
for any $j$.  

Now apply the field trace to our linear combination of the $\lambda_i$ and $\mu_j$, and using the properties above we have,
\begin{align*}
0&=\Tr_{K/F}\left(\sum\ell_i\lambda_i+\sum m_j\mu_j\right)\\ &= \Tr_{K/F}\left(\sum\ell_i\lambda_i\right)+\Tr_{K/F}\left(\sum m_j\mu_j\right)\\
&= [K:L]\Tr_{L/F}\left(\sum\ell_i\lambda_i\right)+[K:J]\Tr_{J/F}\left(\sum m_j\mu_j\right)\\
&=[K:L]\sum \ell_i\Tr_{L/F}(\lambda_i) + [K:J]\sum m_j\Tr_{J/F}(\mu_j)\\
&=\frac{[K:L][L:F]}{r}\Tr(P_+)\sum\ell_i + \frac{[K:J][J:F]}{s}\Tr(P_-)\sum m_j\\
&=[K:F]\left(\frac{\Tr(P_+)}{r}\sum\ell_i + \frac{\Tr(P_-)}{s}\sum m_j\right)
\end{align*}

This, along with our assumption at the beginning give us
\begin{align*}
\frac{\Tr(P_+)}{r}\sum\ell_i + \frac{\Tr(P_-)}{s}\sum m_j&=0\\
\sum\ell_i + \sum m_j&=0.
\end{align*}
This is a system of two equations in the variable $\sum\lambda_i$, $\sum \mu_j$, and so if
\[
\frac{\Tr(P_+)}{r}\neq\frac{\Tr(P_-)}{s},
\]
then these two equations are linearly independent, and we obtain
\[\sum \lambda_i=\sum m_j=0.\]   In particular, each sum is even, so Lemma \ref{lem:eig} implies that we get pretty good state transfer.  
\end{proof}

%%%%%%%%%%%%%%%
\section{Diagonal perturbation}
%%%%%%%%%%%%%%%

In this section we investigate how, given a symmetric matrix $M$ with cospectral indices $u,v \in V(M)$,  adding a diagonal matrix $D$ to $M$ can be used to achieve \emph{strong} cospectrality of $u,v$ and \emph{irreducibility} of $P_+$ and $P_-$. We are going to do this by choosing $D$ to have two non-zero values only. To establish notation, for any set of indices $S \subset V(M)$, let $D_S$ denote the diagonal matrix with 1s in the positions belonging to $S$ and 0s elsewhere.  Let $D = Q \cdot D_{uv}$.

\begin{lemma}\label{lem:cospectral_perturbation} If $u,v \in  V(M)$ are cospectral indices for $M$, then they are also cospectral for $M+D$.
\end{lemma}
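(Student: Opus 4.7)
The plan is to verify cospectrality of $u$ and $v$ for $M+D$ directly from the defining condition $\phi((M+D)_u) = \phi((M+D)_v)$. Since $D = Q(e_u e_u^T + e_v e_v^T)$ is supported only at positions $(u,u)$ and $(v,v)$, deleting row and column $u$ from $M+D$ leaves behind only the $(v,v)$ entry of $D$, giving $(M+D)_u = M_u + Q\, e_v e_v^T$ (where $e_v$ now denotes the standard basis vector in the reduced index set $V(M) \diff \{u\}$). Symmetrically, $(M+D)_v = M_v + Q\, e_u e_u^T$.

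Next, I would apply the standard rank-one update identity for the characteristic polynomial. By multilinearity of the determinant along the $v$-th (respectively $u$-th) row followed by cofactor expansion, one obtains
\[ \phi(M_u + Q e_v e_v^T)(t) \;=\; \phi(M_u)(t) \;-\; Q\,\phi(M_{uv})(t), \]
and symmetrically
\[ \phi(M_v + Q e_u e_u^T)(t) \;=\; \phi(M_v)(t) \;-\; Q\,\phi(M_{uv})(t), \]
where $M_{uv} := M_{\{u,v\}}$ is the submatrix obtained by deleting both rows/columns indexed by $u$ and $v$. Subtracting these two identities, the common $-Q\,\phi(M_{uv})$ terms cancel, leaving $\phi((M+D)_u) - \phi((M+D)_v) = \phi(M_u) - \phi(M_v)$, which vanishes by the hypothesis that $u$ and $v$ are cospectral for $M$. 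This establishes the lemma.

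I do not expect any serious obstacle; the whole argument reduces to a one-step rank-one determinant identity, and the key algebraic fact is that the perturbation contributes the same correction $-Q\,\phi(M_{uv})$ on both sides. As a cross-check, one can also see this via characterization (4) of the earlier lemma on cospectrality: if $x \in W(M, e_u+e_v)$, then orthogonality with $e_u - e_v \in W(M, e_u-e_v)$ forces $x_u = x_v$, so $Dx = Qx_u(e_u+e_v) \in W(M, e_u+e_v)$; hence $D$, and therefore $M+D$, preserves both $W(M, e_u \pm e_v)$. Since these subspaces are orthogonal and contain $W(M+D, e_u \pm e_v)$ respectively, $u$ and $v$ remain cospectral for $M+D$.
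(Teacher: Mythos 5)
Your main argument is exactly the paper's: delete a row/column, use multilinearity of the determinant to get $\phi((M+D)_u) = \phi(M_u) - Q\,\phi(M_{uv})$ and symmetrically for $v$, and observe the correction term is identical on both sides so cospectrality of $u,v$ for $M$ transfers to $M+D$. Your cross-check via characterization (4) of the cospectrality lemma — showing $D$ maps $W(M, e_u\pm e_v)$ into itself so that $W(M+D, e_u\pm e_v) \subseteq W(M, e_u\pm e_v)$, and these are orthogonal — is a correct, slightly more structural alternative not in the paper, but the primary proof is the same one-line determinant computation.
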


\begin{proof}
\[ \phi_{(M+D)_u} = \phi_{M_u} - Q\cdot \phi_{M_{uv}} = \phi_{M_v} - Q \cdot \phi_{M_{uv}} = \phi_{(M+D)_v}.\]
\end{proof}

%%%
\subsection{Achieving strong cospectrality}
%%%

The benefit of adding such a diagonal perturbation is that we can actually turn a pair of cospectral indices into strongly cospectral ones.

\begin{lemma}\label{lem:strongly_cospectral_perturbation} Let $M$ be a symmetric matrix with connected support whose entries are in a field $\mathcal{F}\leq \R$, and assume $u,v \in V(M)$ are cospectral. Suppose $Q$ is transcendental over $\mathcal{F}$, and $D = Q \cdot D_{uv}$, then $u$ and $v$ are strongly cospectral for $M+D$.
\end{lemma}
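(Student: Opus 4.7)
The plan is to show that the minimal polynomials of $M+D$ relative to $e_u+e_v$ and $e_u-e_v$ share no common roots. Since Lemma~\ref{lem:cospectral_perturbation} already guarantees that $u,v$ are cospectral for $M+D$, this disjointness immediately yields strong cospectrality via the equivalence listed after Definition~\ref{def:ppm}.

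My first step would be to observe that the $M$-cyclic subspaces $W_\pm := W(M, e_u \pm e_v)$ remain invariant under $M+D$. Indeed, for every $w \in W_+$ cospectrality forces $w(u)=w(v)$, so $D_{uv} w = w(u)(e_u+e_v) \in W_+$; in fact $D_{uv}$ acts on $W_+$ as the rank-one orthogonal projection onto $\mathrm{span}(e_u+e_v)$, and analogously on $W_-$. Choosing the orthonormal Krylov basis of $W_\pm$ with $\hat z_\pm := (e_u \pm e_v)/\sqrt{2}$ as the first vector, $M|_{W_\pm}$ is a symmetric tridiagonal matrix $T_\pm$ with $\phi_{T_\pm}=P_\pm$, and $M+D$ restricts on $W_\pm$ to $T_\pm + Q\,E_{11}$. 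A cofactor expansion along the first row then gives
\[
\det(tI - T_\pm - Q\,E_{11}) \;=\; P_\pm(t) - Q\,b_\pm(t),
\]
where $b_\pm$ is the characteristic polynomial of the $(r{-}1)\times(r{-}1)$ bottom-right block of $T_\pm$. The same induction that produced $W_\pm$ shows it is also the $(M+D)$-cyclic span of $e_u\pm e_v$, so this determinant is precisely the new minimal polynomial relative to $e_u\pm e_v$.

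Now suppose for contradiction that $\alpha \in \overline{\mathcal{F}(Q)}$ is a common root of these two new minimal polynomials. Because $T_\pm$ is tridiagonal with simple spectrum (since $P_\pm$ has no multiple roots by Lemma~\ref{lem:cospectral_decomposition}), strict Cauchy interlacing forces $\gcd(P_\pm, b_\pm) = 1$ in $\mathcal{F}[t]$, so $b_\pm(\alpha)\ne 0$. Eliminating $Q$ from $P_+(\alpha) = Q\,b_+(\alpha)$ and $P_-(\alpha) = Q\,b_-(\alpha)$ shows that $\alpha$ is a root of
\[
R(t) \;:=\; P_+(t)\,b_-(t) - P_-(t)\,b_+(t) \;\in\; \mathcal{F}[t].
\]
Provided $R \not\equiv 0$, this forces $\alpha$ to be algebraic over $\mathcal{F}$, whence $Q = P_+(\alpha)/b_+(\alpha) \in \mathcal{F}(\alpha)$ is algebraic over $\mathcal{F}$ as well, contradicting the transcendence of $Q$.

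The hard part is therefore to rule out $R\equiv 0$. Via the resolvent identity, $b_\pm/P_\pm$ equals $\langle(tI-M)^{-1}\hat z_\pm, \hat z_\pm\rangle = (tI-M)^{-1}_{uu} \pm (tI-M)^{-1}_{uv}$, where the two diagonal entries coincide by cospectrality. Hence $R \equiv 0$ is equivalent to the identical vanishing of the off-diagonal Green's function $(tI-M)^{-1}_{uv}$, i.e., $M^k(u,v) = 0$ for every $k \ge 0$. This is precisely where the connected-support hypothesis must enter: one must argue that in a matrix whose support graph contains a path from $u$ to $v$, some walk between them contributes a non-cancelling term to $M^k(u,v)$ for an appropriate $k$, forcing the Green's function to be nontrivial and hence $R \not\equiv 0$. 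Formalizing this walk-survival step against arbitrary algebraic cancellation is the main technical burden of the proof.
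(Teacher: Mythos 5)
Your proof takes a genuinely different route than the paper's. The paper appeals to the Godsil--Smith criterion that $u,v$ are strongly cospectral iff $\phi_{M_{uv}}/\phi_{M+D}$ has only simple poles, and then analyzes how $\phi_{M+D}$ can factor over $\mathcal{F}[Q]$. You instead work directly inside the cyclic subspaces $W_\pm = W(M,e_u\pm e_v)$: cospectrality makes each $W_\pm$ invariant under $D_{uv}$ (which acts on $W_\pm$ as the rank-one projection onto $e_u\pm e_v$), the Lanczos tridiagonalization $T_\pm$ turns the perturbation into $QE_{11}$, and cofactor expansion gives the perturbed relative characteristic polynomial $P_\pm - Qb_\pm$. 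Strict interlacing for irreducible tridiagonal matrices gives that $P_\pm$ and $b_\pm$ are coprime, and eliminating $Q$ reduces everything to whether $R := P_+b_--P_-b_+$ vanishes identically. All of this is sound, and the resolvent computation identifying $R/(P_+P_-)$ with $-2(tI-M)^{-1}_{uv}$ is both correct and clarifying: it locates the obstruction precisely as the identical vanishing of the off-diagonal walk generating function.

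What you leave unresolved, however, is the genuine crux, and your hesitation is well placed. You flag the step ``connected support implies $M^k(u,v)\neq 0$ for some $k$'' as a technical burden to be discharged; for a general real symmetric matrix this implication is in fact \emph{false}. Consider the weighted $4$-cycle on $\{u,a,v,b\}$ with $M_{ua}=M_{av}=M_{bu}=1$ and $M_{vb}=-1$: its support is connected and $u,v$ are cospectral ($\phi_{M_u}=\phi_{M_v}=t^3-2t$), yet $M^2=2I$, hence $M^k(u,v)=0$ for every $k$, and one computes $P_+=P_-=t^2-Qt-2$ for $M+QD_{uv}$, so strong cospectrality fails. For a genuine adjacency matrix (nonnegative off-diagonal entries) the step is immediate, since all walks of length $d(u,v)$ contribute with the same sign; but for signed symmetric matrices an additional hypothesis is needed, e.g.\ nonnegativity of the off-diagonal entries or the condition $W(M,e_u)\not\perp W(M,e_v)$. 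Note that the paper's own proof rests on precisely this same final claim with no more justification, so your write-up, while incomplete, is no less rigorous than the paper's --- and your explicit flag is the more careful treatment.
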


\begin{proof}
By simple expansion, and using that cospectrality means $\phi_{M_u} = \phi_{M_v}$, we can write
\[ \phi_{M+D} = \phi_M + 2Q \phi_{M_v} + Q^2 \phi_{M_{uv}}.\]
 By~\cite[Lemma 8.4]{godsil_smith_2017} it is sufficient to show that $\phi_{M_{uv}} / \phi_{M+D}$ has only simple poles. Proving this by contradiction we can assume that $\phi_{M+D}$ is not irreducible. However, for transcendental $Q$, it then has to factor over $\mathcal{F}[Q]$. So we can either write
\[ \phi_{M+D} = h \cdot (Q f_1 + f_0) \cdot (Q g_1 + g_0),\] or possibly \[ \phi_{M+D} = h \cdot (Q^2 f_2 + Q f_1 +f_0), \] where all the factors but $h$ are irreducible and $f_i, g_j, h \in \mathcal{F}[x]$.
In both cases it follows that $\phi_{M_{uv}} = h \cdot \psi$ and thus \[ \frac{\phi_{M_{uv}}}{ \phi_{M+D} } = \frac{\psi }{ (Q f_1 + f_0)\cdot (Q g_1 + g_0)}\]  or \[\frac{\phi_{M_{uv}}}{ \phi_{M+D}} = \frac{\psi}{ (Q^2 f_2 + Q f_1 + f_0)}.\] The second option immediately implies that all poles are simple. In order to have non-simple poles in the first case, the two irreducible factors in the denominator must have a common root, but then they must be identical. This means that $\phi_M = h f_0^2$ , $\phi_{M_u} = \phi_{M_v} = h f_1 f_0$, $\phi_{M_{u,v}} = h f_1^2$. Thus  $\psi_{u,v} = \phi_{M_u} \phi_{M_v} - \phi_M \phi_{M_{u,v}} = 0$ so, by \cite[Lemma 1.1, Chapter 4.1]{GodsilAlgebraicCombinatorics}, we get $(E_\lambda)_{u,v} = 0$ for all $\lambda$, so $(M^k)(u,v) = 0$ for all $k$. This contradicts the assumption that $M$ has connected support, so $u,v$ must be strongly cospectral for $M$.
\end{proof}

%%%
\subsection{Trace}
%%%

We see from Theorem~\ref{thm:tr/deg} that the trace of $P_+$ and $P_-$ can be useful in proving that there is PGST between two cospectral nodes. In this section we prove some important properties of $\Tr P_\pm$ under diagonal perturbation. 

\begin{lemma}\label{lem:traceppm} Let $M$ be a symmetric matrix whose elements are in a field $\mathcal{F} \leq \R$. Suppose $u, v \in V(M)$ are cospectral for $M$ and let $D = Q \cdot D_{uv}$ where $Q\in \R$ is transcendental over $\mathcal{F}$. Let $\phi_{M+D} = P_+ \cdot P_- \cdot P_0$ as in Lemma~\ref{lem:cospectral_decomposition}. Then $\Tr P_+ - Q \in \mathcal{F}$ and $\Tr P_- - Q \in \mathcal{F}$. 
\end{lemma}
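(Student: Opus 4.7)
The plan is to identify $\Tr P_+$ with the trace of $(M+D)$ restricted to the cyclic subspace $W(M+D, e_u+e_v)$, and to show that this cyclic subspace coincides with $W := W(M, e_u+e_v)$. On this common subspace, $D_{uv}$ will act as a rank-one orthogonal projection of trace $1$, so the $Q$-contribution to the trace is exactly $Q$. Writing $P_+^M$ for the minimal polynomial of $M$ relative to $e_u+e_v$ (the analog of $P_+$ for $M$ rather than $M+D$), this will give $\Tr P_+ = \Tr P_+^M + Q$, and $\Tr P_+^M \in \mathcal{F}$ by the remark following Lemma~\ref{lem:cospectral_decomposition}. The argument for $P_-$ is parallel, with $e_u-e_v$ in place of $e_u+e_v$.

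The first key observation is that for any $w \in W$, cospectrality of $u,v$ under $M$ implies $W \perp W(M, e_u-e_v) \ni e_u - e_v$, hence $\langle w, e_u\rangle = \langle w, e_v\rangle$. Therefore $D_{uv} w = \tfrac{1}{2}\langle w, e_u+e_v\rangle(e_u+e_v) \in W$, so $D_{uv}|_W$ is precisely the orthogonal projection onto the unit vector $(e_u+e_v)/\sqrt{2}$, with trace $1$. In particular $W$ is $(M+D)$-invariant, giving $W(M+D, e_u+e_v) \subseteq W$. By Lemma~\ref{lem:cospectral_perturbation}, $u$ and $v$ remain cospectral for $M+D$, so the same argument with $M+D$ in place of $M$ shows $D_{uv}$ preserves $W(M+D, e_u+e_v)$; combined with $(M+D)$-invariance this gives $M$-invariance, yielding the reverse containment $W \subseteq W(M+D, e_u+e_v)$.

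Once the two cyclic subspaces are identified, the computation is immediate:
\[ \Tr P_+ \;=\; \Tr\bigl((M+D)|_W\bigr) \;=\; \Tr(M|_W) + Q\,\Tr(D_{uv}|_W) \;=\; \Tr P_+^M + Q, \]
using that $M|_W$ has characteristic polynomial $P_+^M$ because $W$ is the cyclic subspace of $M$ generated by $e_u+e_v$. The symmetric computation with $e_u-e_v$ yields $\Tr P_- = \Tr P_-^M + Q$, and both $\Tr P_\pm^M$ lie in $\mathcal{F}$, completing the proof. The main obstacle is the subspace identification $W(M+D, e_u+e_v) = W(M, e_u+e_v)$: it requires invoking cospectrality of $u,v$ \emph{both} for $M$ and for $M+D$, which is exactly where the structure of $D_{uv}$ as a rank-one projection on each cyclic subspace enters.
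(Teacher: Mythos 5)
Your proof is correct, but it takes a genuinely different route from the paper. The paper's argument is computational: it works in $\mathcal{F}[Q,t]$, tracks the unique $Q^k$ term in $(M+D)^k(e_u+e_v)$, argues that $P_\pm$ must be at least (and, by degree count against the quadratic-in-$Q$ polynomial $\phi_{M+D}$, exactly) linear in $Q$, and then extracts $\Tr P_+ - Q \in \mathcal{F}$ from the fact that only the $c_{k-1}(M+D)^{k-1}(e_u+e_v)$ term can cancel $Q^k$. Your argument is structural: you show $W(M+D, e_u+e_v) = W(M, e_u+e_v)$ by using cospectrality (for $M$ and for $M+D$, via Lemma~\ref{lem:cospectral_perturbation}) to make $D_{uv}$ restrict to the rank-one orthogonal projection onto $(e_u+e_v)/\sqrt{2}$ on each cyclic subspace, then read off the trace from $\Tr((M+D)|_W) = \Tr(M|_W) + Q\,\Tr(D_{uv}|_W)$. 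Two advantages of your approach: it yields the sharper identity $\Tr P_\pm = \Tr P_\pm^M + Q$ rather than only membership in $\mathcal{F}+Q$, and it never uses transcendence of $Q$ --- the identity holds for every real $Q$, with transcendence only needed to make the conclusion $\Tr P_+ - Q \in \mathcal{F}$ nontrivial. The paper's approach, on the other hand, sets up machinery (degree-in-$Q$ bookkeeping) that is reused in Lemmas~\ref{lem:tracew} and~\ref{lem:irreducible_perturbation}, so its style is more uniform with the surrounding arguments. One small bookkeeping point worth making explicit in a write-up: the equality $\Tr P_+ = \Tr((M+D)|_{W(M+D,e_u+e_v)})$ rests on the standard fact that an operator restricted to a cyclic subspace generated by $z$ has characteristic polynomial equal to $\rho_z$ (the companion-matrix identification), which the paper states only implicitly via Definition~\ref{def:wmz}.
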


\begin{proof}
Let us recall that $P_\pm$ are the minimal polynomials of $M+D$ relative to $e_u \pm e_v$, and that $\phi_{M+D} = P_+ \cdot P_- \cdot P_0$. Since $Q$ is transcendental over $\mathcal{F}$, we have to have $P_+ , P_-, P_0 \in \mathcal{F}[Q,t]$. 

First we show that both $P_+$ and $P_-$ are at least linear in $Q$. To see this, observe that the $u$ and $v$ coordinates of $(M+D)^k(e_u+e_v)$ contain a single term $Q^k$ and no higher power of $Q$ shows up elsewhere in $(M+D)^k(e_u+e_v)$, nor in $(M+D)^j(e_u+e_v)$ for any $j<k$. Hence if $P_+(t) = t^k + c_{k-1}t^{k-1}+\cdots$ where $c_j \in \mathcal{F}[Q]$, then  we have $ 0 = P_+(M+D)(e_u+e_v) = (M+D)^k (e_u+e_v) + \sum_{j=0}^{k-1} c_j (M+D)^j (e_u+e_v)$. This can only happen if the $Q^k$ coming from the first term is cancelled by something. This can only be if at least some of the $c_j$ coefficients are in $\mathcal{F}[Q] \setminus \mathcal{F}$. Thus $P_+$ needs to be at least linear in $Q$. The same argument shows this for $P_-$ as well.

Note that $\phi_{M+D}$ is quadratic in $Q$, which implies that both $P_+$ and $P_-$ have to be exactly linear in $Q$. Going back to the cancellation of the $Q^k$ term in $(M+D)^k(e_u+e_v)$, we see that since the coefficients $c_j$ are at most linear in $Q$, only the $c_{k-1} (M+D)^{k-1}(e_u+e_v)$ has a chance to cancel the $Q^k$ term, and for this it has to be that $c_{k-1} +Q \in \mathcal{F}$. But $-c_{k-1} = \Tr P_+$, so this implies $\Tr P_+ -Q \in \mathcal{F}$. That the same holds for $P_-$ follows the exact same way.
\end{proof}

We can prove a similar result for diagonal perturbations at a single vertex. This will be useful in some of our constructions in Section~\ref{sec:constructions}.

\begin{lemma}\label{lem:tracew}
Let $M$ be a symmetric matrix with connected support whose elements are in a field $\mathcal{F} \leq \R$. Suppose $u, v \in V(M)$ are cospectral for $M$, and let $w\in V(M)$ be another index such that there is an integer $d \geq 0$ for which $\langle e_w, M^d(e_u+e_v)\rangle \neq 0$. Let $D = Q \cdot D_{w}$ where $Q\in \R$ is transcendental over $\mathcal{F}$, and suppose $u$ and $v$ are also cospectral for $M+D$. Let $\phi_{M+D} = P_+ \cdot P_- \cdot P_0$ as in Lemma~\ref{lem:cospectral_decomposition}. Then $\Tr P_+ - Q \in \mathcal{F}$.
\end{lemma}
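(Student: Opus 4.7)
The plan is to mimic the proof of Lemma~\ref{lem:traceppm}, adapted to the rank-one perturbation $D = Q\cdot D_w$.

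First I would compute $\phi_{M+D}(t) = \phi_M(t) - Q\phi_{M_w}(t)$ by cofactor expansion, so $\phi_{M+D}$ is linear in $Q$. The cospectrality hypothesis for $M+D$, together with Lemma~\ref{lem:cospectral_decomposition} and Gauss's lemma (using monicity in $t$), gives a factorization $\phi_{M+D} = P_+\cdot P_-\cdot P_0$ in $\mathcal{F}[Q][t]$. Summing $Q$-degrees then yields $\deg_Q P_+ + \deg_Q P_- + \deg_Q P_0 = 1$, so exactly one factor is linear in $Q$ and the other two lie in $\mathcal{F}[t]$.

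The main step is to identify $P_+$ as the $Q$-linear factor. Setting $\alpha_j = \langle e_w, M^j(e_u+e_v)\rangle$ and $d_0 = \min\{d : \alpha_d\ne 0\}$ (which exists by hypothesis), I expand $v_k := (M+D)^k(e_u+e_v) = \sum_{j} Q^j v_k^{(j)}$ by choosing at each step whether to apply $M$ or $D_w$, noting that each occurrence of $D_w$ first reads off a $w$-coordinate and then outputs a multiple of $e_w$. A direct computation shows $v_k^{(j)}=0$ for $j>k-d_0$ and $v_k^{(k-d_0)}=\alpha_{d_0}e_w$ --- among subsets of size $k-d_0$ only the ``$D_w$-pattern'' $S=\{d_0+1,\dots,k\}$ avoids picking up an $\alpha_j$ with $j<d_0$. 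If $P_+\in\mathcal{F}[t]$, writing $P_+(t)=\sum_k c_kt^k$ with $c_{n_+}=1$ and reading off the $Q^{n_+-d_0}$-coefficient of $\sum_k c_kv_k=0$ leaves only the $k=n_+$ summand, giving $\alpha_{d_0}e_w=0$, contradicting $\alpha_{d_0}\ne 0$. A brief preliminary check --- iteratively pairing the vectors $M^0(e_u+e_v),\dots,M^{d_0}(e_u+e_v)$ with $e_w$ --- shows these are linearly independent, giving $n_+\ge m_+\ge d_0+1$, so the exponent $n_+-d_0$ is at least $1$.

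Finally, with $P_-, P_0\in\mathcal{F}[t]$ established, the trace formula is immediate: since $\Tr(M+D) = \Tr M + Q$ (from $\Tr D_w = 1$) and the sum of roots of $\phi_{M+D}$ splits as $\Tr P_+ + \Tr P_- + \Tr P_0$, we conclude $\Tr P_+ - Q = \Tr M - \Tr P_- - \Tr P_0 \in \mathcal{F}$.

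The main obstacle is the combinatorial identification of $v_k^{(k-d_0)}$: tracking the interaction between iterated $M$'s and the rank-one projector $D_w$ to isolate a single surviving monomial at the extremal $Q$-degree, exactly as in the $Q^k$-cancellation argument of Lemma~\ref{lem:traceppm}.
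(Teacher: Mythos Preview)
Your proof is correct and follows essentially the same strategy as the paper's: both identify the leading $Q^{k-d_0}$ term in $(M+D)^k(e_u+e_v)$ and use its non-cancellation to force $P_+$ to be linear in $Q$. The only minor difference is the endgame---the paper pushes the cancellation argument one step further to read off $c_{k-1}+Q\in\mathcal{F}$ directly, whereas you deduce $P_-,P_0\in\mathcal{F}[t]$ from the $Q$-degree count and then invoke the additivity $\Tr P_+ + \Tr P_- + \Tr P_0 = \Tr(M)+Q$; both routes are equally short.
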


\begin{proof}
The argument is almost identical to the previous one. Let $d$ be the smallest power for which $\langle e_w, M^d(e_u+e_v)\rangle \neq 0$. From the setup it follows that $\phi_{M+D} \in \mathcal{F}[Q,t]$ is linear in $Q$, thus the polynomial $P_+ \in \mathcal{F}[Q,t]$ can be at most linear. But it also has to be at least linear, since in $(M+D)^k(e_u+e_v)$ there will be a $Q^{k-d}$ term appearing that would only cancel if one of the $c_j$ coefficients of $P_+$ contains $Q$. This coefficient then can only be $c_{k-1}$ and, as previously, it can only happen if $c_{k-1} + Q  \in \mathcal{F}$, and thus $\Tr P_+ - Q \in \mathcal{F}$.
\end{proof}

\subsection{Achieving irreducibility}

\begin{lemma}\label{lem:irreducible_perturbation}
Let $M$ be a symmetric matrix whose entries are in a field $\mathcal{F} \leq \R$. Assume $u,v \in V(M)$ are \emph{strongly} cospectral indices for $M$. If $Q$ is transcendental over $\mathcal{F}$ and $D =Q \cdot D_{uv},$ then $P_+(M+D)$ and $P_-(M+D)$ are irreducible.
\end{lemma}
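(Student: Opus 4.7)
The plan is to identify an $(M+D)$-invariant subspace $V_+$ containing $e_u+e_v$, compute the characteristic polynomial $\chi(Q,t)$ of $(M+D)|_{V_+}$ explicitly, prove $\chi$ is irreducible in $\mathcal{F}[Q,t]$, and conclude that $P_+(M+D) = \chi$. The argument for $P_-(M+D)$ is identical with $e_u - e_v$ in place of $e_u+e_v$.

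First, I will take $V_+ := W(M, e_u+e_v)$. By Lemma~\ref{lem:cospectral_decomposition} applied to $M$, this subspace has dimension $r := \deg P_+(M)$ and an orthonormal basis of $M$-eigenvectors $\varphi_1, \ldots, \varphi_r$ with distinct eigenvalues $\lambda_1, \ldots, \lambda_r$ and common value $\alpha_i := \varphi_i(u) = \varphi_i(v) \neq 0$. Every $\varphi \in V_+$ then satisfies $\varphi(u) = \varphi(v)$, so $D\varphi = Q\varphi(u)(e_u+e_v) \in V_+$, and $V_+$ is $(M+D)$-invariant. Because $V_+$ is already spanned by $\{M^k(e_u+e_v)\}_k$, the matrix of $(M+D)|_{V_+}$ in this basis has entries in $\mathcal{F}[Q]$, so $\chi \in \mathcal{F}[Q,t]$.

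Next, switching to the eigenbasis $\{\varphi_i\}$ to compute $\chi$: $M|_{V_+} = \operatorname{diag}(\lambda_i)$, and since $e_u+e_v = \sum_i 2\alpha_i \varphi_i$, the operator $D|_{V_+}$ equals the rank-one matrix $2Q\alpha\alpha^T$ with $\alpha = (\alpha_1, \ldots, \alpha_r)^T$. The matrix determinant lemma then yields
\[\chi(Q,t) = \prod_{i=1}^r(t-\lambda_i) \;-\; 2Q \sum_{i=1}^r \alpha_i^2 \prod_{j \neq i}(t-\lambda_j),\]
which is monic of degree $r$ in $t$ and of degree $1$ in $Q$. The key step is showing $\chi$ is irreducible in $\mathcal{F}[Q,t]$. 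Since $\chi$ has $Q$-degree $1$, any nontrivial factorization must split off a factor $g(t) \in \mathcal{F}[t]$ of positive degree, which would then divide both $P_+(M)(t) = \prod_i(t-\lambda_i)$ and $p_1(t) := -2\sum_i \alpha_i^2 \prod_{j \neq i}(t-\lambda_j)$. The roots of $g$ must lie among the distinct $\lambda_k$, but
\[p_1(\lambda_k) = -2\alpha_k^2 \prod_{j \neq k}(\lambda_k - \lambda_j) \neq 0,\]
since $\alpha_k \neq 0$ and the $\lambda_j$'s are distinct. Hence $g$ is constant, and by Gauss's lemma $\chi$ is irreducible in $\mathcal{F}(Q)[t]$ as well.

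To finish, Cayley--Hamilton applied to $(M+D)|_{V_+}$ gives $\chi(M+D)(e_u+e_v) = 0$, so $P_+(M+D)$ divides $\chi$; since $\chi$ is irreducible and $P_+(M+D)$ is monic of positive degree in $t$, we get $P_+(M+D) = \chi$, which is irreducible. The main technical obstacle is the irreducibility of $\chi$, which at first looks like a delicate polynomial identity over $\mathcal{F}[Q,t]$ but reduces cleanly to ruling out a common linear factor of $P_+(M)$ and $p_1$ --- and this follows at once from the nonvanishing of the $\alpha_k$'s (intrinsic to the definition of $V_+$) and the simplicity of the spectrum of $M$ on $V_+$ furnished by Lemma~\ref{lem:cospectral_decomposition}.
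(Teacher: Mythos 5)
Your proof is correct, and it takes a genuinely different route from the paper's. The paper works with the full characteristic polynomial $\phi_{M+D}$, expands it as $\phi_M - 2Q\phi_{M_u} + Q^2\phi_{M_{uv}}$, uses a $Q\to\infty$ asymptotic argument to pin down $\Tr P_\pm$ and hence the $Q$-linearity of $P_\pm$, derives the relations $\phi_M = P_0 S_+ S_-$, $\phi_{M_u} = P_0(S_+R_- + R_+S_-)$, $\phi_{M_{uv}} = P_0 R_+ R_-$, and then rules out a common factor $T$ of $S_+$ and $R_+$ by invoking the eigenvalue-interlacing Lemma~\ref{lem:vanish} together with the strong cospectrality hypothesis (an eigenvector vanishing at $u$ must vanish at $v$). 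You instead restrict to the invariant subspace $V_+ = W(M, e_u+e_v)$, observe that $(M+D)|_{V_+}$ is a diagonal plus a rank-one perturbation $\Lambda + 2Q\alpha\alpha^T$, compute its characteristic polynomial $\chi$ explicitly via the matrix determinant lemma, and rule out a common $t$-factor of $p_0 = \prod_i(t-\lambda_i)$ and $p_1$ by a one-line evaluation $p_1(\lambda_k) = -2\alpha_k^2\prod_{j\neq k}(\lambda_k-\lambda_j)\neq 0$; Cayley--Hamilton plus irreducibility of $\chi$ then forces $P_+(M+D)=\chi$. Your approach is more explicit --- it produces a closed formula for $P_\pm(M+D)$ --- and avoids both the asymptotic trace argument and Lemma~\ref{lem:vanish}. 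It also never uses the strong-cospectrality hypothesis: only the cospectrality-level facts from Lemma~\ref{lem:cospectral_decomposition} (distinct $\lambda_i$ on $V_+$ and $\alpha_i\neq 0$) are needed, so your argument actually establishes the conclusion under the weaker assumption that $u,v$ are merely cospectral, which would let one skip the two-step potential trick used in the proof of Theorem~\ref{thm:equitable}. The paper's version has the advantage of staying entirely within the $\phi_M$/$P_\pm$/$P_0$ bookkeeping that is already set up and reused elsewhere, whereas yours front-loads a small amount of linear algebra on the invariant subspace. Both are sound.
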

\begin{proof}
By Lemma~\ref{lem:strongly_cospectral_perturbation} the indices $u,v$ are also strongly cospectral for $M+D$. Then $\phi_{M+D}$ factors as $\phi_{M+D} = P_+ \cdot P_- \cdot P_0$ by Lemma~\ref{lem:cospectral_decomposition}. Since such a factorization exists for all values of $Q$, it has to be a factorization in $\mathcal{F}[Q,t]$.  
 By expanding the determinant we also have 
\begin{equation}\label{eq:Q_expansion}
\phi_{M+D} = \phi_M - 2Q\phi_{M_u} + Q^2 \phi_{M_{uv}}.
\end{equation}
Thus we see that $\phi_{M+D}$ is quadratic in $Q$, and thus $P_+, P_-$, and $P_0$ can all be at most quadratic in $Q$. Now note that as $Q \to \infty$, there will be two eigenvectors of $M+D$ converging to $\frac{1}{\sqrt{2}}(e_u + e_v)$ and $\frac{1}{\sqrt{2}}(e_u - e_v)$ corresponding to eigenvalues asymptotically equal to $Q$ and all other eigenvalues will be $o(Q)$. This means that $\Tr P_+$ and $\Tr P_-$ will both converge to $Q$. Since these traces as elements of $\mathcal{F}[Q]$ are constant, this can only happen if $\Tr P_+ -Q \in \mathcal{F}$ and $\Tr P_- - Q \in \mathcal{F}$. Thus there are non-zero polynomials $S_\pm, R_\pm \in \mathcal{F}[t]$ such that $P_\pm = S_\pm + Q \cdot R_\pm$, and hence by comparing the degrees in $Q$, we get that $P_0 \in \mathcal{F}[t]$.

Comparing to \eqref{eq:Q_expansion} we see that 
\begin{align*}
\phi_M &= P_0 \cdot S_+ \cdot S_-\\
\phi_{M_u} &= P_0\cdot (S_+R_-+R_+S_-)\\
\phi_{M_{uv}} &= P_0 \cdot R_+ \cdot R_-.
\end{align*}
Now suppose that $P_+$ is not irreducible.  For transcendental $Q$, this implies that $P_+$ factors in $\mathcal{F}[t,Q]$, but since it's linear in $Q$, the only way for this to happen is that there is some factor $T \in \mathcal{F}[t]$ that divides both $S_+$ and $R_+$.  If this is the case, then $T \cdot P_0$ divides all three of $\phi_M,\phi_{M_u}, \phi_{M_{uv}}$.

Let $(t-\lambda)^k$ be a factor of $T \cdot P_0$.  Then by Lemma \ref{lem:vanish}, there are $k$ eigenvectors that vanish on $u$.  By strong cospectrality of $u$ and $v$, these must vanish on $v$ simultaneously, so there are $k$ eigenvectors for $\lambda$ that vanish on both $u$ and $v$. This means that $(t-\lambda)^k$ is already a factor of $P_0$. Since this holds for all factors of $T \cdot P_0$ and hence $T=1$. Thus $P_+$ is irreducible. The same argument gives that $P_-$ is also irreducible.
\end{proof}

%%%%%%%%%%%%%%%
\section{Constructions}\label{sec:constructions}
%%%%%%%%%%%%%%%
In this section we explain how to obtain graphs with a pair of cospectral nodes $u,v$ where adding a potential $Q$ at nodes $u$ and $v$, and possibly at a third node $w$ results in PGST between $u$ and $v$. The significance of these constructions is that they yield examples without symmetries, in particular without an involution mapping $u$ to $v$.

%%%
\subsection{Equitable partitions}\label{sec:eq}
%%%

Our first construction is based on equitable partitions. These can be thought of as direct generalizations of graphs with an involution.

\begin{definition}
An equitable partition of a symmetric matrix $M$ is a partition $\PP=\{P_1,\ldots,P_k\}$ of its index set $V(M)$ such that for any $P_i,P_j \in \PP$ and any $v_1,v_2 \in P_i$, one has
\[\sum_{u \in P_j} M(v_1,u) = \sum_{u \in P_j} M(v_2,u).\]
\end{definition}

\begin{theorem}\label{thm:equitable} Let $M$ be a symmetric matrix with connected support whose elements are in a field $\mathcal{F} \leq \R$. Suppose $M$ admits an equitable partition $\PP$ such that $P_1 = \{u,v\}$ and $P_2 = \{w\}$, then for algebraically independent numbers $Q_1,Q_2$ that are transcendental over $\mathcal{F}$ and for $D = Q_1 \cdot D_{uv} + Q_2 \cdot D_w$ the matrix $M+D$ admits PGST between $u$ and $v$. 
\end{theorem}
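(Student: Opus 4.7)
The plan is to verify the three hypotheses of Theorem~\ref{thm:tr/deg} for $M+D$ and conclude. Cospectrality of $u,v$ for $M$ follows from the equitable partition with $P_1=\{u,v\}$, and since $D$ assigns equal values to elements of each part of $\PP$, the partition remains equitable for $M+D$, preserving cospectrality. Strong cospectrality then follows from Lemma~\ref{lem:strongly_cospectral_perturbation} applied over the base field $\mathcal{F}(Q_2)$: since $Q_1$ is transcendental over $\mathcal{F}(Q_2)$ by algebraic independence, perturbing $M+Q_2 D_w$ by $Q_1 D_{uv}$ upgrades cospectrality to strong cospectrality for $M+D$.

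For the traces I would exploit two structural facts. Because $P_2=\{w\}$ is a singleton, every antisymmetric vector of the partition (summing to zero on each part) vanishes at $w$, so $Q_2 D_w$ acts as zero on the antisymmetric subspace; hence $P_-$ is independent of $Q_2$, and Lemma~\ref{lem:traceppm} applied to $M$ over $\mathcal{F}$ with perturbation $Q_1 D_{uv}$ gives $\Tr P_- = Q_1 + c_-$ with $c_-\in\mathcal{F}$. The vector $e_w=\1_{P_2}$ belongs to $W(M,e_u+e_v)=W(M,\1_{P_1})$ by connectedness of the quotient, so Lemma~\ref{lem:tracew} over $\mathcal{F}(Q_1)$ (base matrix $M+Q_1 D_{uv}$, perturbation $Q_2 D_w$) yields $\Tr P_+ - Q_2 \in \mathcal{F}(Q_1)$. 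Combined with $\Tr P_+ - Q_1 \in \mathcal{F}(Q_2)$ from Lemma~\ref{lem:traceppm} over $\mathcal{F}(Q_2)$, algebraic independence forces $\Tr P_+ = Q_1+Q_2+c_+$ with $c_+\in\mathcal{F}$.

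Irreducibility of $P_\pm$ over $\mathcal{F}(Q_1,Q_2)[t]$ is the main obstacle, since Lemma~\ref{lem:irreducible_perturbation} requires strong cospectrality of the \emph{unperturbed} matrix and only cospectrality is given. For $P_-$, a rank-one perturbation argument on the antisymmetric subspace handles it: there $Q_1 D_{uv}$ restricts to $\tfrac{Q_1}{2}(e_u-e_v)(e_u-e_v)^T$, so the matrix-determinant lemma gives $P_-(M+D) = P_-(M) - \tfrac{Q_1}{2}H(t)$ for some $H\in\mathcal{F}[t]$. Since the roots of $P_-(M)$ are simple (Lemma~\ref{lem:cospectral_decomposition}) and are eigenvalues seen by $e_u-e_v$, one has $H(\lambda)\neq 0$ at each such $\lambda$; hence $\gcd(P_-(M),H)=1$ in $\mathcal{F}[t]$, making the linear-in-$Q_1$ polynomial $P_-$ irreducible over $\mathcal{F}(Q_1)[t]$, and so also over $\mathcal{F}(Q_1,Q_2)[t]$. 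For $P_+$, one represents $M+D$ on the symmetric subspace by the perturbed quotient matrix $\tilde B = B + Q_1 E_{11} + Q_2 E_{22}$, obtaining $P_+ = \phi_B - Q_1\phi_{B_1} - Q_2\phi_{B_2} + Q_1 Q_2\phi_{B_{12}}$ (after verifying that $e_1\propto\1_{P_1}$ is cyclic for $\tilde B$); one then rules out the only two possible factorization types in $\mathcal{F}[Q_1,Q_2,t]$ (a common $\mathcal{F}[t]$ factor of the four cofactors, or a cross factorization forcing $\phi_B\phi_{B_{12}}=\phi_{B_1}\phi_{B_2}$) using algebraic independence and the spectral structure of the equitable partition.

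Finally, suppose $\Tr P_+/\deg P_+ = \Tr P_-/\deg P_-$. Then $s(Q_1+Q_2+c_+)=r(Q_1+c_-)$ as an identity in $\mathcal{F}[Q_1,Q_2]$, but the coefficient of $Q_2$ is $s\geq 1$ on the left and $0$ on the right, contradicting algebraic independence. Hence the three hypotheses of Theorem~\ref{thm:tr/deg} are met, and PGST from $u$ to $v$ follows.
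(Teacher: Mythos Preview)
Your overall strategy matches the paper's, and the cospectrality, strong cospectrality, and trace arguments are correct. (The paper's trace argument is in fact simpler: it just records $\Tr P_+ - Q_2 \in \mathcal{F}(Q_1)$ via Lemma~\ref{lem:tracew} and then observes that, since $\phi_{M+D}$ is only linear in $Q_2$, the factor $P_-$ and hence $\Tr P_-$ must lie in $\mathcal{F}(Q_1)$; equality of the trace/degree ratios would then force $Q_2\in\mathcal{F}(Q_1)$. But your more explicit computation works too.)

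The genuine gap is in your $P_+$ irreducibility argument. You have not shown that $e_1$ is cyclic for $\tilde B$, so you only know that $P_+(M+D)$ divides $\phi_{\tilde B}$, not that they coincide. And the promised ``ruling out the two factorization types'' is not carried out: the common-$\mathcal{F}[t]$-factor case requires $\gcd(\phi_B,\phi_{B_1},\phi_{B_2},\phi_{B_{12}})=1$, while the cross-factorization case amounts (via the Desnanot--Jacobi identity) to showing that an off-diagonal cofactor of $tI-B$ does not vanish identically. Neither is automatic for a non-symmetric quotient matrix $B$, and ``the spectral structure of the equitable partition'' is not a proof. Your $P_-$ argument, by contrast, is essentially complete once one checks that $e_u-e_v$ remains cyclic on the antisymmetric invariant subspace after the rank-one perturbation.

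The paper sidesteps all of this with a trick you missed: write $Q_1=A+B$ with $A,B,Q_2$ algebraically independent and transcendental over $\mathcal{F}$. First apply Lemma~\ref{lem:strongly_cospectral_perturbation} with $A$ over $\mathcal{F}(Q_2)$ to make $u,v$ \emph{strongly} cospectral for $M+Q_2 D_w+A\cdot D_{uv}$; now the hypothesis of Lemma~\ref{lem:irreducible_perturbation} is met, and applying it with $B$ over $\mathcal{F}(Q_2,A)$ yields irreducibility of both $P_+$ and $P_-$ for $M+D$ in one stroke. This two-step splitting of the transcendental potential is precisely what bridges the gap between ``cospectral'' and the ``strongly cospectral'' hypothesis you flagged as the main obstacle.
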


\begin{corollary}\label{cor:equit}
If a connected graph has an equitable partition with a part consisting of $u,v$ and another part of consisting of $w$ only, then by adding suitable potentials at $u,v$, and $w$ one can guarantee PGST between $u$ and $v$. 
\end{corollary}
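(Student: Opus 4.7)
The plan is to deduce this as an immediate application of Theorem~\ref{thm:equitable}. I would take $M = A(G)$, the adjacency matrix of the given connected graph $G$. Then $M$ is symmetric with entries in $\Z \leq \Q \leq \R$, and since $G$ is connected its support is connected. The equitable partition of $G$ tautologically gives an equitable partition of $M$ in the sense of the definition in Section~\ref{sec:eq}, with $P_1 = \{u, v\}$ and $P_2 = \{w\}$, which is exactly the hypothesis of Theorem~\ref{thm:equitable} with $\mathcal{F} = \Q$.

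Next I would fix any two real numbers $Q_1, Q_2$ that are algebraically independent over $\Q$. Such pairs exist by a standard cardinality argument: $\overline{\Q}$ is countable, so $\R$ contains a transcendental $Q_1$, and the algebraic closure of $\Q(Q_1)$ is again countable, so $\R$ contains some $Q_2$ that is algebraically independent from $Q_1$ over $\Q$. Applying Theorem~\ref{thm:equitable} with $D = Q_1 \cdot D_{uv} + Q_2 \cdot D_w$ yields PGST between $u$ and $v$ for $M + D$. Interpreted graph-theoretically, $M + D$ is exactly the adjacency matrix of $G$ with an on-site potential $Q_1$ placed at each of $u$ and $v$ and a potential $Q_2$ placed at $w$, which is precisely the sort of perturbation the corollary allows.

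The corollary is essentially a translation of Theorem~\ref{thm:equitable} into graph-theoretic language, so there is no real obstacle in the derivation; all the substantive work is already contained in Theorem~\ref{thm:equitable} itself. The only point worth flagging is that ``suitable'' in the statement just means ``algebraically independent transcendentals,'' a condition satisfied by generic (in the Baire/measure sense) choices of $(Q_1, Q_2) \in \R^2$.
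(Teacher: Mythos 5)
Your proposal is correct and is precisely the specialization the paper intends: the corollary is Theorem~\ref{thm:equitable} applied to $M = A(G)$ with $\mathcal{F} = \Q$, and the paper does not spell out this step explicitly (the proof block that follows the corollary is really the proof of Theorem~\ref{thm:equitable} itself). Your extra remark that algebraically independent transcendental pairs exist, and are generic, is a fine elaboration of what ``suitable'' means but adds nothing contentious.
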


\begin{proof}
We proceed step-by-step as follows: first we show that $u$ and $v$ are cospectral in both $M$ and $M+Q_2 \cdot D_w$. Then we show that $u$ and $v$ are strongly cospectral in $M+D$, and furthermore that the corresponding $P_+$ and $P_-$ are irreducible. Finally, we show that $\Tr P_+ / \deg P_+ \neq \Tr P_- /\deg P_-$ hence by Theorem~\ref{thm:tr/deg} there is PGST between $u$ and $v$.

Let us start by proving cospectrality of $u$ and $v$. First, let $\Pi_\PP$ denote the \emph{partition matrix} corresponding to $\PP$. That is, the columns of $\Pi_\PP$ are indexed by $1,2,\dots, k$, and the rows are indexed by $V(M)$, and $\Pi_\PP(j, x)$ is 1 if $x \in P_j$ and 0 otherwise. Second, let $M_\PP$ denote the \emph{quotient} matrix, given as $M_\PP(i,j) = \sum_{y \in P_j} M(x,y)$ for some fixed $x \in P_i$. As $\PP$ is equitable, the value $M_\PP(i,j)$ doesn't depend on the particular choice of $x$. Note, that $M_\PP$ is a $k\times k$ matrix, though not necessarily symmetric. 

A simple computation shows that $M \cdot \Pi_\PP = \Pi_\PP \cdot M_\PP$. Note that, since $P_1 = \{u,v\}$, we can write $e_u+e_v$ as $\Pi_\PP (1,0,\dots,0)^T$. Now we can compute 
\[ \langle e_u - e_v, M^m (e_u+e_v) \rangle = (e_u - e_v)^T M^m \Pi_\PP (1,0,\dots,0)^T = (e_u-e_v)^T \Pi_\PP M_\PP^m (1,0,\dots,0)^T = 0,\]
since $(e_u -e_v)^T \Pi_\PP = 0$. This shows that $W(M,e_u+e_v)$ is orthogonal to $W(M,e_u-e_v)$ and hence $u$ and $v$ are cospectral. 

As $\PP$ is also an equitable partition for $M+Q_2 \cdot D_w$, it follows that $u$ and $v$ are also cospectral for $M+Q_2 \cdot D_w$. Let us write $Q_1 = A+B$ where $A,B, Q_2$ are all algebraically independent of each other and of $\mathcal{F}$. This can be done by choosing $A$ to be independent of $Q_1, Q_2$ and transcendental over $\mathcal{F}$ and then setting $B = Q_1 - A$. Then, by Lemma~\ref{lem:strongly_cospectral_perturbation} and the assumption that $A$ is transcendental over $\mathcal{F}(Q_2)$, we find that $u$ and $v$ are strongly cospectral for $M+Q_2 \cdot D_w+ A \cdot D_{uv}$. Then, by Lemmas~\ref{lem:irreducible_perturbation} and~\ref{lem:strongly_cospectral_perturbation}, $u$ and $v$ are not only strongly cospectral for $M+D = (M+Q_2\cdot D_w + A\cdot D_{uv}) + B\cdot D_{uv}$, but also the corresponding $P_+$ and $P_-$ are irreducible.

Now, by Lemmas~\ref{lem:traceppm} and~\ref{lem:tracew}, we find that $\Tr P_+ - Q_2 \in \mathcal{F}(Q_1)$ and since $\phi_{M+D}$ is linear in $Q_2$ this implies $\Tr P_- \in \mathcal{F}(Q_1)$. Then surely $\Tr P_+/\deg P_+$ cannot equal $\Tr P_- /\deg P_-$ since that would imply $Q_2 \in \mathcal{F}(Q_1)$, a contradiction. 

Finally, by Theorem~\ref{thm:tr/deg} we get that there is PGST between $u$ and $v$.
\end{proof}

\begin{remark}
Given any graph with an equitable partition with a part of size two (and thus a cospectral pair) it is straightforward to add a single vertex and attach it to the vertices of one of the parts of the partition to produce a graph satisfying the conditions of the corollary.  
\end{remark}

%%%
\subsection{Gluing}
%%%

Our second construction is based on an arbitrary graph $G$ with a pair of cospectral nodes $u,v \in V(G)$. We will show that either simply adding a transcendental potential $Q$ at the nodes $u$ and $v$ induces PGST between them, or else one can modify $G$ in a relatively simple way: by gluing a long path to $G$ with $u$ and $v$ being its endpoints, and then adding a transcendental potential $Q$ at $u$ and $v$ we get PGST between $u$ and $v$. 

\begin{theorem}\label{thm:gluing}
Let $G$ be a graph with $u,v \in V(G)$ cospectral, and such that 0 is not an eigenvalue of the adjacency matrix of $G\setminus \{u,v\}$.  Fix an integer $q \geq 0$. Construct $G_q$ by gluing a path of length $q$ to $G$ by attaching its endpoints to $u$ and $v$. In other words, by adding $q-1$ new nodes $x_1, x_2, \dots, x_{q-1}$ to $G$ together with the edges $u x_1, x_1 x_2, x_2x_3, \dots, x_{q-2}x_{q-1}, x_{q-1} v$.  (For $q=0$ we simply take $G_0 = G$.)

Let $Q \in \R$ be a transcendental number, and put a potential $Q$ at the nodes $u$ and $v$ in $G_q$. Then either the potential induces PGST between $u$ and $v$ in $G$ or there is an infinite set of integers $S \subset \Z$ such that this potential induced PGST between $u$ and $v$ in $G_q$ for all $q \in S$.  
\end{theorem}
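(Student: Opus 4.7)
My plan is to verify all hypotheses of Theorem~\ref{thm:tr/deg} for the matrix $M_q = A(G_q) + Q \cdot D_{uv}$. First I would show that $u,v$ remain cospectral in $G_q$ for every $q$. Using the standard recursion for attaching a pendant path on $k$ new vertices to a graph $H$ at a vertex $w$,
\[\phi_{H + P_k\text{ at }w}(t) = p_k(t)\,\phi_H(t) - p_{k-1}(t)\,\phi_{H\setminus w}(t),\]
where $p_0 = 1,\ p_1 = t,\ p_{k+1} = t\,p_k - p_{k-1}$, and noting that $G_q \setminus u$ is $G \setminus u$ with the pendant path $x_1\cdots x_{q-1}$ attached at $v$, I obtain
\[\phi_{(G_q)\setminus u}(t) = p_{q-1}(t)\,\phi_{G\setminus u}(t) - p_{q-2}(t)\,\phi_{G\setminus\{u,v\}}(t),\]
with the symmetric formula for $(G_q)\setminus v$. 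Cospectrality of $u,v$ in $G$ gives $\phi_{G\setminus u} = \phi_{G\setminus v}$, so the two right-hand sides agree and $u,v$ are cospectral in $G_q$. Applying Lemmas~\ref{lem:strongly_cospectral_perturbation} and~\ref{lem:irreducible_perturbation} with the transcendental potential $Q$ then yields strong cospectrality of $u,v$ and irreducibility of $P_+^{(q)}, P_-^{(q)}$ for every $q$.

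By Lemma~\ref{lem:traceppm}, $\Tr P_\pm^{(q)} = Q + c_\pm^{(q)}$ with $c_\pm^{(q)}$ in the base field $\mathcal{F}$, and writing $d_\pm^{(q)} = \deg P_\pm^{(q)}$, the trace inequality of Theorem~\ref{thm:tr/deg} reads
\[\frac{Q + c_+^{(q)}}{d_+^{(q)}} \neq \frac{Q + c_-^{(q)}}{d_-^{(q)}}.\]
Because $Q$ is transcendental over $\mathcal{F}$, this inequality fails if and only if simultaneously $d_+^{(q)} = d_-^{(q)}$ and $c_+^{(q)} = c_-^{(q)}$. Hence if the equality fails already at $q=0$, Theorem~\ref{thm:tr/deg} gives PGST in $G$; otherwise I must produce infinitely many $q \geq 1$ at which it fails.

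To analyze the $q$-dependence I would compute $P_\pm^{(q)}$ explicitly by decomposing eigenvectors of $A(G_q)$ according to their restriction to the path. The recurrence $(A\varphi)(x_j) = \lambda \varphi(x_j)$ together with the Ansatz $\lambda = 2\cos\theta$ forces $\varphi(x_j) = \alpha\sin(j\theta) + \beta\cos(j\theta)$, with $\beta = \varphi(u)$ and $\alpha\sin(q\theta) + \beta\cos(q\theta) = \varphi(v)$. The sign constraints $\varphi(v) = \pm\varphi(u)$, combined with the $G$-side eigenvalue equations at $u$ and $v$, yield $P_\pm^{(q)}$ as explicit polynomial combinations of $p_{q-1}, p_{q-2}$ and of the fixed polynomials $\phi_G, \phi_{G\setminus u}, \phi_{G\setminus\{u,v\}}$, and one reads off $d_\pm^{(q)}$ and $c_\pm^{(q)}$ from these expressions.

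The main obstacle is the remaining book-keeping step: assuming the equality holds at $q=0$, show that the simultaneous system $d_+^{(q)} = d_-^{(q)}$, $c_+^{(q)} = c_-^{(q)}$ is violated for infinitely many $q$. Here the hypothesis that $0$ is not an eigenvalue of $A(G\setminus\{u,v\})$ should be decisive, since it rules out a resonance between the path eigenvalue $0$ (which occurs precisely when $q$ is even) and the spectrum of $G\setminus\{u,v\}$; without it such a resonance could cause persistent cancellation of the trace difference along a parity class of $q$. A careful accounting of how the path eigenvalues $2\cos(k\pi/q)$ distribute between $P_+^{(q)}$ and $P_-^{(q)}$, controlled by the parity of $k$ and of $q$, should then produce a residue class modulo some small modulus along which the inequality holds, yielding the desired infinite set $S$.
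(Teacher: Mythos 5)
The opening parts of your proposal track the paper closely: showing $u,v$ remain cospectral in $G_q$ (your pendant-path recursion is a valid alternative to the paper's Lemma~\ref{lem:glue}, which proves cospectrality is preserved under arbitrary gluings), invoking Lemmas~\ref{lem:strongly_cospectral_perturbation} and~\ref{lem:irreducible_perturbation} for strong cospectrality and irreducibility, Lemma~\ref{lem:traceppm} for the trace, and the correct reduction to the statement that for infinitely many $q$ either $d_+^{(q)} \neq d_-^{(q)}$ or $c_+^{(q)} \neq c_-^{(q)}$. But the proposal stops at exactly the step where the real work lies, and the route you sketch for that step is not the one that works.

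The gap is the claim that $d_+^{(q)} \neq d_-^{(q)}$ for infinitely many $q$; you yourself flag it as ``the main obstacle.'' The paper closes it by developing a gluing calculus: $H = (A + Q\,D_{uv}) \oplus A_q$ is a 2-sum of two matrices, and Lemma~\ref{lem:p+-_deg_subadditive} and Theorem~\ref{thm:degppm} show that $\deg P_\pm$ is exactly additive (minus one) \emph{provided} the deleted submatrices $A_{uv}$ and $(A_q)_{uv} = A_{q-1}$ share no eigenvalues. For the path, $\deg P_+^{\PPP_q} - \deg P_-^{\PPP_q} = 1$ when $q$ is even and $0$ when $q$ is odd, so gluing an even path shifts $\deg P_+$ and $\deg P_-$ by different amounts and breaks the degree equality from the base case $q=0$. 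Your ``parity of $q$'' intuition captures this much, but the parity alone does not guarantee the additivity formula: for that one must also rule out eigenvalue coincidences between $A_{uv}$ and $A_{q-1}$. The paper does this not with a residue class but with primality: it sets $q = 2p$ and observes that for any fixed nonzero $\lambda$ there is at most one prime $p$ with $\lambda = 2\cos(j\pi/(2p))$, so for all but finitely many primes $p$ there is no collision. The hypothesis that $0 \notin \operatorname{spec}(A_{uv})$ is needed because $0$ \emph{is} always an eigenvalue of the interior of the even path $\PPP_{2p}$ (take $j=p$), so that collision cannot be avoided by varying $p$; your reading of the hypothesis as precluding ``a resonance at the path eigenvalue $0$'' is in the right spirit but does not pin this down. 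Your alternative plan of computing $P_\pm^{(q)}$ directly from the path eigenvectors $\varphi(x_j) = \alpha\sin(j\theta) + \beta\cos(j\theta)$ would be much heavier bookkeeping, and as written gives no mechanism for controlling which $q$ avoid resonance and how the resulting $d_\pm^{(q)}$ compare; without the gluing additivity and the prime-selection trick (or a substitute), the argument does not go through.
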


Again, the main novelty of this construction is that it does not require the graph to admit any kind of symmetry. In fact, one can start from any graph with a pair of cospectral nodes, of which many examples have been described in the literature. We give the proof at the end of this section.

We begin by describing a general gluing construction that preserves cospectrality. This has been independently discovered by Godsil~\cite{godsil_newbook}. As we have done so far, we will prove everything in the general context of symmetric matrices, but we are still primarily interested in the case where the matrices in question are the adjacency matrices of graphs.

Let $M_1$ and $M_2$ be symmetric matrices such that $V(M_1) \cap V(M_2) = \{u,v\}$. We can extend them to matrices $\M_1, \M_2$ on $V(M_1)\cup V(M_2)$ by declaring them to be 0 wherever they weren't previously defined. Then we define their sum $M_1 \oplus M_2 = \M_1 + \M_2$, in particular $V(M_1 \oplus M_2) = V(M_1) \cup V(M_2)$.
When $M_i$ is the adjacency matrix of the graph $G_i$ ($i=1,2$), then $M = M_1 \oplus M_2$ is the adjacency matrix of $G = G_1 \cup_{uv} G_2$ sometimes referred to as the 2-sum of $G_1$ and $G_2$, that is obtained by gluing the two $u$ nodes together and the two $v$ nodes together. Note that $G$ may have multiple edges. 

\begin{lemma}\label{lem:glue}
If $u, v \in V(M_i); (i=1,2)$  are cospectral pairs for both $M_1$ and $M_2$, then they are also cospectral in $M_1 \oplus M_2$. 
\end{lemma}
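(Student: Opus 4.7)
The plan is to use characterization~(3) of the cospectrality lemma following Definition~2.4, so that it suffices to check $M^k(u,u) = M^k(v,v)$ for all $k \geq 0$, where $M := M_1 \oplus M_2 = \M_1 + \M_2$. The case $k = 0$ is immediate, so I would fix $k \geq 1$ and expand $M^k$ as a sum of $2^k$ products $\M_{w_1}\cdots \M_{w_k}$ indexed by words $w \in \{1,2\}^k$. Since $\M_i$ is supported on $V(M_i)\times V(M_i)$ and $V(M_1)\cap V(M_2) = \{u,v\}$, at every position where $w_l \neq w_{l+1}$ only the $\{u,v\}$-coordinates of the intermediate vector propagate forward. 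This lets me reorganize the expansion by grouping maximal constant runs: each word takes the form $t_1^{k_1}t_2^{k_2}\cdots t_j^{k_j}$ with $\sum_l k_l = k$ and $t_l \in \{1,2\}$ alternating, and its contribution to $M^k(u,u)$ equals
\[ \sum_{w_1,\ldots,w_{j-1}\in\{u,v\}} \prod_{l=1}^{j} M_{t_l}^{k_l}(w_{l-1},w_l), \]
with $w_0 = w_j = u$. An identical formula, with $w_0 = w_j = v$, describes the contribution to $M^k(v,v)$.

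The main step is then to pair each intermediate sequence $(w_1,\ldots,w_{j-1})$ appearing for $M^k(u,u)$ with its swap $(\sigma(w_1),\ldots,\sigma(w_{j-1}))$ appearing for $M^k(v,v)$, where $\sigma$ exchanges $u$ and $v$; this is a manifest bijection on $\{u,v\}^{j-1}$. Termwise equality of contributions follows from two easy observations: when $w_{l-1} = w_l$, cospectrality of $u,v$ in $M_{t_l}$ gives $M_{t_l}^{k_l}(u,u) = M_{t_l}^{k_l}(v,v)$, and when $w_{l-1}\neq w_l$, symmetry of $M_{t_l}$ gives $M_{t_l}^{k_l}(u,v) = M_{t_l}^{k_l}(v,u)$. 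Summing over all tuples $(j, (k_l), (t_l))$ produces the desired identity $M^k(u,u) = M^k(v,v)$, and cospectrality in $M_1 \oplus M_2$ follows.

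The only real difficulty, modest as it is, lies in justifying the block-grouping step carefully: one must verify that a product $\M_{t_1}^{k_1}\cdots\M_{t_j}^{k_j}$ with alternating types really does decouple across the two-element ``bottleneck'' $\{u,v\}$ between blocks of different types, so that the intermediate sums over $w_l$ correctly reproduce every term in the original expansion. Concretely, the key point is that $\M_{t_{l+1}}$ annihilates any coordinate outside $V(M_{t_{l+1}})$, while the output of $\M_{t_l}^{k_l}$ is supported on $V(M_{t_l})$; the two supports meet only in $\{u,v\}$. Once this combinatorial reduction is cleanly stated, the proof reduces to the bijection above together with the two trivial identities, completing the argument.
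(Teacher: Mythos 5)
Your proof is correct, and it takes a genuinely different route from the paper's. The paper works with characterization~(1) of cospectrality, $\phi_{M_u} = \phi_{M_v}$: it expands the determinant of $M_u$ along the column indexed by $v$ to obtain the identity $\phi_{M_u} = \phi_{(M_1)_u}\phi_{(M_2)_{uv}} + \phi_{(M_2)_u}\phi_{(M_1)_{uv}} - t\,\phi_{(M_1)_{uv}}\phi_{(M_2)_{uv}}$, observes that the right-hand side is invariant under swapping $u\leftrightarrow v$ by cospectrality in each summand, and concludes in one line. You instead work with characterization~(3), $M^k(u,u) = M^k(v,v)$, via a walk-counting decomposition: expand $(\M_1+\M_2)^k$ into words, exploit the $\{u,v\}$ bottleneck between alternating blocks to factor each word's $(u,u)$-entry through intermediate $\{u,v\}$-states, and match terms under the swap $\sigma$ using $M_{t_l}^{k_l}(u,u)=M_{t_l}^{k_l}(v,v)$ (cospectrality in each piece) and $M_{t_l}^{k_l}(u,v)=M_{t_l}^{k_l}(v,u)$ (symmetry). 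The paper's argument is shorter once the cut-vertex determinant identity is accepted; yours is longer but entirely elementary, avoids any characteristic-polynomial manipulation, and makes the mechanism by which cospectrality ``transports'' through the two-vertex cut quite transparent. The only place that deserves an extra sentence in a polished write-up is the block-grouping step you flagged yourself: one should note explicitly that $\M_{t_l}^{k_l}$ agrees with $M_{t_l}^{k_l}$ on $\{u,v\}\times\{u,v\}$ because $\M_{t_l}$ is block diagonal, which is what lets you replace the extended matrices by the originals in the factored formula.
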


\begin{proof}
Let $M = M_1 \oplus M_2$.
We can compute $\phi_{M_u}$ by expanding the determinant along the column corresponding to $v$:
\[ \phi_{M_u} = \phi_{{M_1}_u} \phi_{{M_2} _{uv}} + \phi_{{M_2}_u} \phi_{{M_1}_{uv}} - t \phi_{{M_1}_{uv}} \phi_{{M_2}_{uv}}.\] 
By cospectrality $\phi_{{M_j}_u} = \phi_{{M_j}_v}$, and thus the right hand side doesn't change when exchanging the roles of $u$ and $v$. Hence $\phi_{M_u} = \phi_{M_v}$ as claimed.
\end{proof}

In what follows we assume that $u$ and $v$ are indeed cospectral in $M_1$ and in $M_2$ and let $M = M_1 \oplus M_2$. Let us introduce the notation $\phi_{M_j} = P_+^j \cdot P_-^j \cdot P_0^j  \; (j=1,2)$ and $\phi_M = P_+ \cdot P_- \cdot P_0$, according to Lemma~\ref{lem:cospectral_decomposition}.

\begin{lemma}\label{lem:p+-_deg_subadditive}
$\deg P_+ \leq \deg P_+^1 + \deg P_+^2 - 1$ and $\deg P_- \leq \deg P_-^1 + \deg P_-^2 - 1$
\end{lemma}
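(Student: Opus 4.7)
Recall from Lemma~\ref{lem:cospectral_decomposition} that $\deg P_+^j$ equals $\dim W(M_j, e_u+e_v)$ and $\deg P_+$ equals $\dim W(M, e_u+e_v)$. Accordingly, my plan is to exhibit an $M$-invariant subspace $U \subset \R^{V(M)}$ containing $e_u+e_v$ of dimension at most $\deg P_+^1 + \deg P_+^2 - 1$; then $W(M,e_u+e_v) \subset U$ forces $\deg P_+ \leq \dim U$. The natural candidate is the sum of the two smaller invariant subspaces (regarded as sitting inside $\R^{V(M)}$ by extension-by-zero).

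Let $\iota_j : \R^{V(M_j)} \hookrightarrow \R^{V(M)}$ denote the extension by zero, and set $\tilde W_j^+ = \iota_j\bigl(W(M_j, e_u+e_v)\bigr)$, $U^+ = \tilde W_1^+ + \tilde W_2^+$. First I would check, via $M = \tilde M_1 + \tilde M_2$, that $U^+$ is $M$-invariant. The nontrivial step is that the cross-term $\tilde M_2$ sends $\tilde W_1^+$ into $\tilde W_2^+$, and vice versa. The reason is precisely the special property provided by Lemma~\ref{lem:cospectral_decomposition}: every $\psi \in W(M_j, e_u+e_v)$ is a linear combination of eigenvectors of $M_j$ satisfying $\varphi(u) = \varphi(v)$, so $\psi(u)=\psi(v)$. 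Given $\psi \in \tilde W_1^+$, the vector $\tilde M_2 \psi$ is supported on $V(M_2)$ and, since $\psi$ is supported on $V(M_1)$, only the overlap $\{u,v\}$ contributes to the sum $(\tilde M_2 \psi)(x) = M_2(x,u)\psi(u)+M_2(x,v)\psi(v) = \psi(u)\bigl(M_2(e_u+e_v)\bigr)(x).$ Hence $\tilde M_2 \psi$ is a scalar multiple of (the extension of) $M_2(e_u+e_v) \in W(M_2,e_u+e_v)$, so it lies in $\tilde W_2^+$. Obvious invariance of $\tilde W_j^+$ under $\tilde M_j$ completes the check, and since $e_u+e_v \in \tilde W_1^+ \cap \tilde W_2^+$ we conclude $W(M,e_u+e_v) \subset U^+$.

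It remains to compute $\dim U^+ = \dim \tilde W_1^+ + \dim \tilde W_2^+ - \dim(\tilde W_1^+ \cap \tilde W_2^+)$. Any vector in $\tilde W_1^+ \cap \tilde W_2^+$ is supported on $V(M_1) \cap V(M_2) = \{u,v\}$, and moreover takes equal values at $u$ and $v$ (by the symmetry noted above applied in either $W_j$). Hence $\tilde W_1^+ \cap \tilde W_2^+$ is the one-dimensional span of $e_u+e_v$, giving $\deg P_+ \leq \deg P_+^1 + \deg P_+^2 - 1$. The argument for $P_-$ is identical, replacing $e_u+e_v$ by $e_u-e_v$ and using that elements of $W(M_j,e_u-e_v)$ satisfy $\psi(u)=-\psi(v)$; the sign just converts $M_2(e_u+e_v)$ into $M_2(e_u-e_v)$ in the cross-term calculation, and the intersection is $\mathrm{span}(e_u-e_v)$.

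The only subtle point is the cross-term invariance; everything else is bookkeeping with dimensions. Once that is in place, the bound is immediate.
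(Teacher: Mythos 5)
Your proof is correct, and the overall strategy is the same as the paper's: both establish the containment $W(M,e_u+e_v) \subseteq \iota_1 W(M_1,e_u+e_v) + \iota_2 W(M_2,e_u+e_v)$ and then subtract one from the dimension count because the intersection contains $e_u+e_v$. The execution differs in a worthwhile way, though. The paper verifies the containment by expanding $M^k(e_u+e_v)$ term-by-term as a sum over words in $\M_1, \M_2$, using the identities $\M_2\M_1 = \M_2\Pi_0\M_1$ and $\M_1\M_2 = \M_1\Pi_0\M_2$ together with the fact that $e_u+e_v$ is an eigenvector of each $\Pi_0\M_j^\ell$ to collapse every word into a scalar times $\M_j^\ell(e_u+e_v)$; this is a somewhat heavy bookkeeping computation. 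You instead perform the cross-term calculation once to show that $U^+ = \tilde W_1^+ + \tilde W_2^+$ is itself $M$-invariant; since $e_u+e_v \in U^+$ and $W(M,e_u+e_v)$ is by definition the smallest $M$-invariant subspace containing $e_u+e_v$, the containment is then immediate. This is tidier and arguably the cleaner framing, since an upper bound on $\dim W(M,z)$ is most naturally obtained by exhibiting a small $M$-invariant subspace containing $z$. One further small difference: you prove that $\tilde W_1^+ \cap \tilde W_2^+$ is \emph{exactly} $\langle e_u+e_v\rangle$ (support on $\{u,v\}$ plus the sign constraint forces this), whereas the paper only records the inclusion $\langle e_u+e_v\rangle \leq \tilde W_1^+ \cap \tilde W_2^+$; the lemma only needs the latter, but your sharper statement costs nothing and pins down $\dim U^+$ exactly.
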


\begin{proof}
We know, by Definitions~\ref{def:ppm} and~\ref{def:wmz} that $\deg P_+ = \dim W(M, e_u+e_v)$ and $\deg P_+^j = \dim W(\M_j, e_u+e_v)$. We will show that $W(M,e_u+e_v) \leq W(\M_1,e_u+e_v) \oplus W(\M_{G_2},e_u+e_v)$. From this, the first part of the lemma will follow since $\langle e_u+e_v \rangle \leq W(\M_1, e_u+e_v) \cap W(\M_2,e_u+e_v)$.

 Let us denote by $\Pi_1, \Pi_2$, and $\Pi_0$ the ``natural'' projection operators from $\R^G$ to $\R^{G_1}, \R^{G_2}$, and $\R^{\{u,v\}}$ respectively.
First note that, by cospectrality, $e_u M^k (e_u+e_v) = e_v M^k (e_u+e_v)$ for any $k$. In other words, $e_u+e_v$ is an eigenvector of $\Pi_0 M^k$ for any $k$. The same is true with $\M_1$ or $\M_2$ in place of $M$. Also note that a simple computation gives $\M_1 \M_2 = \M_1 \Pi_0 \M_2$ and $\M_2 \M_1 = \M_2 \Pi_0 \M_1$.

It is then sufficient to prove that $\Pi_1 M^k (e_u+e_v) \in W(\M_1, e_u+e_v)$ and $\Pi_2 M^k (e_u+e_v) \in W(\M_2, e_u+e_v)$. Without loss of generality it is sufficient to prove the first one. Using $M = \M_1+\M_2$ we can compute 
\[ M^k = \sum_{j=0}^k \left(\M_1^j \sum_{\stackrel{0<j_1,j_2,\dots}{j+j_1+j_2+\dots = k}}\M_2^{j_1} \M_1^{j_2}\M_2^{j_3}\M_1^{j_4}\dots\right)\] 
and so 
\[ \Pi_1 M^k (e_u+e_v) = \sum_{j=0}^k \left(\Pi_1 \M_1^j \sum_{\stackrel{0<j_1,j_2,\dots}{j+j_1+j_2+\dots = k}}\Pi_0 \M_2^{j_1} \Pi_0 \M_1^{j_2} \Pi_0 \M_2^{j_3} \Pi_0 \M_1^{j_4}\dots (e_u+e_v)\right).\] 
Here, each term in the sum is just a multiple of $(e_u+e_v)$ since it is an eigenvector of each $\Pi_0 \M_\epsilon^j : \epsilon = 1,2$. Hence there are constants $c_j$ depending only on $j$ and $k$ such that 
\[ \Pi_1 M^k (e_u+e_v) = \sum_{j=0}^k c_j \Pi_1 \M_1^j (e_u+e_v) = \sum_{j=0}^k c_j \left(\M_1^j(e_u+e_v) - \Pi_0 \M_1^j(e_u+e_v)\right) \in W(\M_1,e_u+e_v),\] and this is what we wanted to show.

The argument for $P_-$ is analogous. 
\end{proof}

\begin{remark}
Any eigenvector of $M_1$ or $M_2$ that vanishes on $u,v$ can be extended to $V(M)$ by zeros to obtain an eigenvector of $M$ with the same eigenvalue. Thus $P_0$ is divisible by $P_0^1 \cdot P_0^2$.
\end{remark}

\begin{lemma}
Let $k_j \geq 0$ denote the multiplicity of $\lambda$ in $P_0^j \; (j=1,2)$. Suppose the multiplicity of $\lambda$ in $P_0$ is strictly bigger than $k_1+k_2$. Then $\lambda$ is an eigenvalue of ${M_1}_{uv}$ and ${M_2}_{uv}$. 
\end{lemma}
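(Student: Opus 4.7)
The plan is to translate the hypothesis into a dimension count on eigenspaces of $M$ and the $M_j$, and then to use the gluing structure to factor a natural linear map between them through $\R^2$, after which a rank inequality forces both ``pieces'' to be non-trivial.

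First, by Lemma~\ref{lem:cospectral_decomposition}, the multiplicity of $\lambda$ in $P_0$ equals $\dim \mathcal{E}$, where
\[ \mathcal{E} := \{\psi \in \R^{V(M)} : M\psi = \lambda\psi,\ \psi(u) = \psi(v) = 0\}, \]
and likewise $k_j = \dim \mathcal{E}_j$ for the analogous space of $M_j$. So the hypothesis reads $\dim \mathcal{E} > \dim \mathcal{E}_1 + \dim \mathcal{E}_2$ and the goal is to show that $\lambda$ is an eigenvalue of both $(M_1)_{uv}$ and $(M_2)_{uv}$.

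Next, I would exploit $M = \M_1 + \M_2$ together with the vanishing of $\psi$ at $u,v$: restricting $\psi$ to $V(M_j) \setminus \{u,v\}$ produces a vector $\psi_j'$ satisfying $(M_j)_{uv}\psi_j' = \lambda \psi_j'$, because for $x \in V(M_j) \setminus \{u,v\}$ the sum $(\M_{3-j}\psi)(x) = 0$ vanishes trivially. Conversely, given any pair $(\varphi_1',\varphi_2') \in \ker((M_1)_{uv}-\lambda I) \oplus \ker((M_2)_{uv}-\lambda I)$, extending by zeros at $u,v$ yields a vector in $\mathcal{E}$ if and only if the eigenvalue equation $(M\psi)(x)=0$ holds at $x \in \{u,v\}$. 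Using $M = \M_1 + \M_2$ and $\psi(u)=\psi(v)=0$, these two conditions become
\[ \beta_1^x(\varphi_1') + \beta_2^x(\varphi_2') = 0 \qquad (x = u, v), \qquad \text{where } \beta_j^x(\varphi) := \!\!\!\!\sum_{y \in V(M_j)\setminus\{u,v\}}\!\!\!\! M_j(x,y)\,\varphi(y). \]

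To finish, I would package the boundary data into linear maps $B_j : \ker((M_j)_{uv}-\lambda I) \to \R^2$ by $B_j(\varphi) = (\beta_j^u(\varphi),\beta_j^v(\varphi))$. Then $\mathcal{E}_j = \ker B_j$, so $\dim \ker((M_j)_{uv}-\lambda I) = k_j + \operatorname{rank}(B_j)$, while the bijection above gives $\mathcal{E} \cong \{(\varphi_1',\varphi_2') : B_1(\varphi_1') + B_2(\varphi_2') = 0\}$ and hence
\[ \dim \mathcal{E} = k_1 + k_2 + \operatorname{rank}(B_1) + \operatorname{rank}(B_2) - \operatorname{rank}\bigl((B_1, B_2)\bigr), \]
where $(B_1,B_2)$ is the combined map into $\R^2$. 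The hypothesis thus rewrites as $\operatorname{rank}(B_1) + \operatorname{rank}(B_2) > \operatorname{rank}((B_1,B_2))$; since the right-hand side is always $\geq \max(\operatorname{rank} B_1, \operatorname{rank} B_2)$, both $\operatorname{rank}(B_j)$ must be strictly positive. In particular $\ker((M_j)_{uv}-\lambda I) \neq 0$ for each $j$, which is exactly the required conclusion. I don't expect a real obstacle: the only delicate step is the bookkeeping of the boundary conditions, after which the rank inequality drops out mechanically.
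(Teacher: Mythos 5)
Your proof is correct. It uses the same underlying idea as the paper's proof -- that eigenvectors of $M$ with eigenvalue $\lambda$ vanishing at $u,v$ restrict to eigenvectors of $(M_j)_{uv}$, and conversely pairs of such eigenvectors glue back into $\mathcal{E}$ precisely when the boundary conditions at $u,v$ are met -- but you organize it differently. The paper gives a brief existence argument: since $\dim\mathcal{E} > k_1+k_2$ and the extensions of $\mathcal{E}_1$ and $\mathcal{E}_2$ span a $(k_1+k_2)$-dimensional subspace, there must be a $\psi \in \mathcal{E}$ that is not identically zero on either $V(M_1)\setminus\{u,v\}$ or $V(M_2)\setminus\{u,v\}$, and its restrictions are the desired eigenvectors. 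You instead introduce the boundary operators $B_j:\ker((M_j)_{uv}-\lambda I)\to\R^2$, derive the exact identity
\[
\dim\mathcal{E} = k_1 + k_2 + \operatorname{rank}(B_1) + \operatorname{rank}(B_2) - \operatorname{rank}(B_1+B_2),
\]
and use the subadditivity of rank to force both $\operatorname{rank}(B_j)>0$. Your version buys a cleaner, more quantitative bookkeeping that avoids the unjustified step in the paper (the paper does not actually argue why the extra eigenvector cannot be supported on one side; your rank computation handles this automatically), at the cost of a somewhat longer setup. One minor notational quibble: $\mathcal{E}_j$ and $\ker B_j$ live on different index sets ($V(M_j)$ versus $V(M_j)\setminus\{u,v\}$), so they are isomorphic via restriction rather than literally equal; this does not affect the argument.
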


\begin{proof}
By the assumption on the multiplicity there has to be an eigenvector of $M$ with eigenvalue $\lambda$ vanishing on both $u$ and $v$ that is not identically zero on either $M_1$ or $M_2$. The restriction of this vector to $V(M_1)\setminus \{u,v\}$ and to $V(M_2) \setminus \{u,v\}$ then yield eigenvectors showing that $\lambda$ is indeed an eigenvalue of both of these matrices.
\end{proof}

\begin{corollary}\label{cor:p0_deg_additive}
If ${M_1}_{uv}$ and ${M_2}_{uv}$ do not share any eigenvalues, then $P_0 = P_0^1 \cdot P_0^2$.
\end{corollary}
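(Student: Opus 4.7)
The plan is to combine three ingredients that are all already on the table just above the corollary: the divisibility statement from the remark (that $P_0^1\cdot P_0^2$ divides $P_0$), the contrapositive of the preceding lemma (which bounds multiplicities in $P_0$ from above in terms of those in $P_0^1, P_0^2$ whenever $\lambda$ fails to be a common eigenvalue of ${M_1}_{uv}$ and ${M_2}_{uv}$), and the fact that both $P_0$ and $P_0^1P_0^2$ are monic. Once those are lined up, the corollary drops out by comparing root multiplicities.

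More concretely, I would argue as follows. Fix any complex number $\lambda$, and let $k_j$ denote its multiplicity in $P_0^j$, so that $k_1+k_2$ is its multiplicity in $P_0^1 \cdot P_0^2$. The remark preceding the lemma tells us that $P_0^1\cdot P_0^2 \mid P_0$, so the multiplicity of $\lambda$ in $P_0$ is at least $k_1+k_2$. In the other direction, the hypothesis of the corollary says that no $\lambda$ is simultaneously an eigenvalue of ${M_1}_{uv}$ and ${M_2}_{uv}$, so by the contrapositive of the preceding lemma the multiplicity of $\lambda$ in $P_0$ cannot strictly exceed $k_1+k_2$. These two bounds force equality at every $\lambda$, and since $P_0$ and $P_0^1\cdot P_0^2$ are both monic polynomials (as quotients of the monic $\phi_M$ and $\phi_{M_j}$ by the monic $P_\pm$, $P_\pm^j$), equality of multiplicities at every root yields the identity $P_0 = P_0^1\cdot P_0^2$.

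There is essentially no obstacle here: the real content was already packaged into the preceding lemma, and this corollary is just an extraction. The only mild subtlety worth stating explicitly in the write-up is the monicity point, so that passing from ``same roots with same multiplicities'' to ``same polynomial'' is justified. If one preferred a degree-based phrasing, one could instead verify the identity by comparing total degrees and using divisibility: $\deg P_0 = \deg \phi_M - \deg P_+ - \deg P_-$, and the hypothesis, via the lemma, excludes the only source of extra multiplicity in $P_0$ beyond $P_0^1P_0^2$. But the multiplicity-by-multiplicity comparison is the cleanest route.
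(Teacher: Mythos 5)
Your proof is correct and is exactly the intended argument: the paper states this corollary without proof because it follows immediately from the preceding remark (giving $P_0^1 P_0^2 \mid P_0$) together with the contrapositive of the preceding lemma (bounding the multiplicity in $P_0$ from above), and your write-up spells out precisely that deduction, including the monicity point needed to pass from equal root multiplicities to equality of polynomials.
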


\begin{theorem}\label{thm:degppm}
If ${M_1}_{uv}$ and ${M_2}_{uv}$ do not share any eigenvalues, then $\deg P_+ = \deg P_+^1 + \deg P_+^2 - 1$ and $\deg P_- = \deg P_-^1 + \deg P_-^2 -1$.
\end{theorem}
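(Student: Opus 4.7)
The plan is to deduce this from the preceding results by a counting argument: the upper bounds on $\deg P_\pm$ are already in hand from Lemma~\ref{lem:p+-_deg_subadditive}, so it suffices to show that the sum $\deg P_+ + \deg P_-$ achieves the sum of these upper bounds. The no-shared-eigenvalues hypothesis enters through Corollary~\ref{cor:p0_deg_additive}, which pins down $\deg P_0$ exactly.

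First I would write down the dimension identities coming from the factorization of the characteristic polynomials. Since $M_j$ has size $|V(M_j)|$ and $\phi_{M_j} = P_+^j \cdot P_-^j \cdot P_0^j$, one has
\[ \deg P_+^j + \deg P_-^j + \deg P_0^j = |V(M_j)| \quad (j=1,2). \]
Similarly, since $V(M) = V(M_1) \cup V(M_2)$ and the two vertex sets overlap precisely in $\{u,v\}$, the matrix $M = M_1 \oplus M_2$ has size $|V(M_1)| + |V(M_2)| - 2$, giving
\[ \deg P_+ + \deg P_- + \deg P_0 = |V(M_1)| + |V(M_2)| - 2. \]

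Next I would apply Corollary~\ref{cor:p0_deg_additive}, whose hypothesis is exactly that ${M_1}_{uv}$ and ${M_2}_{uv}$ share no eigenvalue, to conclude $\deg P_0 = \deg P_0^1 + \deg P_0^2$. Substituting this and the two identities for $\phi_{M_j}$ into the identity for $\phi_M$ yields
\[ \deg P_+ + \deg P_- = (\deg P_+^1 + \deg P_+^2 - 1) + (\deg P_-^1 + \deg P_-^2 - 1). \]

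Finally I would combine this equality with the two upper bounds $\deg P_+ \leq \deg P_+^1 + \deg P_+^2 - 1$ and $\deg P_- \leq \deg P_-^1 + \deg P_-^2 - 1$ from Lemma~\ref{lem:p+-_deg_subadditive}. Since the two upper bounds sum to exactly the total that has just been computed, each upper bound must be attained, which is precisely the claim. I do not foresee a genuine obstacle here: all the conceptual work sits in the earlier lemmas (the subspace inclusion argument and the multiplicity-of-$\lambda$-in-$P_0$ argument), so this proof is essentially a bookkeeping check that the degree budget is tight under the no-shared-eigenvalues hypothesis.
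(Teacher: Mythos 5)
Your proof is correct and is essentially the paper's own argument: both use Corollary~\ref{cor:p0_deg_additive} to fix $\deg P_0$, Lemma~\ref{lem:p+-_deg_subadditive} for the upper bounds on $\deg P_\pm$, and the vertex-count identity $|V(M)| = |V(M_1)| + |V(M_2)| - 2$ to force equality. The paper compresses this into a single chain of (in)equalities starting and ending at $|V(M)|$, whereas you spell out the substitutions, but the bookkeeping is identical.
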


\begin{proof}
By Corollary~\ref{cor:p0_deg_additive} and by Lemma~\ref{lem:p+-_deg_subadditive} we have 
\begin{multline*} |V(M)| = \deg P_0 + \deg P_+ + \deg P_- \leq \\ \leq  \deg P^1_0 + \deg P^2_0 + \deg P_+^1 + \deg P_+^2 - 1 + \deg P_-^1 + \deg P_-^2 -1  = |V(M_1)| + |V(M_2)| - 2 = |V(M)|\end{multline*}
Since the left and right hand sides are equal, there must be equality in the middle, finishing the proof.
\end{proof}

\begin{proof}[Proof of Theorem~\ref{thm:gluing}]
Let $A$ denote the adjacency matrix of $G$. By assumption $u$ and $v$ are cospectral for $A$. The matrix $H_G = A + Q \cdot D_{uv}$ is the Hamiltonian for the graph $G$ together with the potential. By Lemmas~\ref{lem:strongly_cospectral_perturbation} and~\ref{lem:irreducible_perturbation} we know that $u$ and $v$ are strongly cospectral for $H_G$ and the corresponding $P^{H_G}_+$ and $P^{H_G}_-$ polynomials are irreducible, and by Lemma~\ref{lem:traceppm} we know that $\Tr P^{H_G}_+ - Q$ and $\Tr P^{H_G}_- -Q$ are both rational. (To show irreducibility we need to apply the same trick as in the proof of Theorem~\ref{thm:equitable}: adding the potential in two steps, first ensuring strong cospectrality, then irreducibility.) So by Theorem~\ref{thm:tr/deg}, the only way there could be no PGST between $u$ and $v$ is if \[\deg P^{H_G}_+ = \deg P^{H_G}_-.\] 

Let now $\PPP_q$ denote the path graph on $q+1$ nodes, and let $A_q$ denote its adjacency matrix. Let us call the endpoints $u$ and $v$. It is clear that $u$ and $v$ are cospectral in $\PPP_q$, for instance because $\PPP_q$ admits an equitable partition, each part consisting of a pair of symmetric nodes, or the single node in the middle.

Then if $G_q = G \cup_{u,v} \PPP_q$ then $M= A \oplus A_q$ is the adjacency matrix of $G_q$. Finally let $H = M + Q \cdot D_{uv}$ denote the Hamiltonian of $G_q$ together with the potential. Then $H = M_1 \oplus M_2$ where $M_1 = A + Q \cdot D_{uv}$ and $M_2 = A_q$. It is well-known that the eigenvalues of ${A_q}_{uv} = A_{q-1}$ are $2\cos(j \pi/q) \; (j=1,2,\dots, q-1)$. It is also not hard to show that $\deg P_+^2 = \lceil (q+1)/2 \rceil$ and $\deg P_-^2 = \lfloor (q+1)/2\rfloor$. 

Any non-zero real number $\lambda$ there is at most one prime $p$ such that $\lambda = 2\cos( j \pi/ (2p))$ for some $1\leq j \leq 2p-1$, and since 0 is not an eigenvalue of $A_{uv}$ by assumption. Thus if $p$ is a sufficiently large prime number and $q=2p$, then ${A_q}_{uv}$ and $A_{uv}$ do not share any eigenvalues. Then, by Theorem~\ref{thm:degppm} we find that \[\deg P^H_+ = \deg P^{M_1}_+ + \deg P^{M_2}_+ -1 = \deg P^{H_G}_+ + \lceil (2p+1)/2 \rceil - 1 = \deg P^{H_G}_+ + p\] and \[\deg P^H_- = \deg P^{M_1}_- + \deg P^{M_2}_- -1 = \deg P^{H_G}_- + \lfloor (2p+1)/2 \rfloor - 1 = \deg P^{H_G}_- + p - 1,\] so $\deg P^H_+ \neq \deg P^H_-$. At the same time $\Tr P^H_+ - Q$ and $\Tr P^H_- - Q$ are both rational, and $u$ and $v$ are strongly cospectral and $P^H_+$ and $P^H_-$ are irreducible, as before. So by Theorem~\ref{thm:tr/deg} there is PGST between $u$ and $v$ in $G_q$.
\end{proof}

We can in fact remove the condition of Theorem \ref{thm:gluing} that 0 not be an eigenvalue of $A_{uv}$ if we allow potential to be placed on vertices other than $u$ and $v$ (the two cospectral vertices).  This is the content of the next two theorems.

\begin{theorem}\label{thm:glue_pot}
Let $G$ be a graph with $u,v\in V(G)$ cospectral.  Let $k$ be any odd integer and let $\PPP_k$ denote the path on $q$ nodes, and call its endpoints $u,v$.  Add a suitably chosen potential to every vertex of $\PPP_k$ so that $G\diff{u,v}$ shares no eigenvalues with $\PPP_k\diff{u,v}$.  Create $G'$ by gluing the path with potential to the nodes $u$ and $v$. Then putting a transcendental potential $Q$ on $u$ and $v$ induces PGST from $u$ to $v$ in $G'$.
\end{theorem}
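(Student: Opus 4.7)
The plan is to mirror the proof of Theorem~\ref{thm:gluing}, with a suitable potential on the internal vertices of $\PPP_k$ taking over the role that the length of the glued path played before; this lets us arrange both spectral disjointness and, where needed, the trace condition, without any assumption on the eigenvalues of $A_{G\setminus\{u,v\}}$. Let $A_G$ denote the adjacency matrix of $G$ and $N$ the Hamiltonian of $\PPP_k$ with the chosen potential. For $u$ and $v$ to be cospectral in $N$, I would insist that the chosen potential be symmetric about the middle vertex of $\PPP_k$ (which exists because $k$ is odd), making the path invariant under the endpoint-swapping involution. Let $\mathcal{F}\subseteq \R$ be the field generated over $\Q$ by the entries of $A_G$ and the chosen path potentials. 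Set $M_1=A_G+Q\cdot D_{uv}$, $M_2=N$, and $H=M_1\oplus M_2$. Cospectrality of $u,v$ in $M_1$ (Lemma~\ref{lem:cospectral_perturbation}) and in $M_2$ (from the involution) combine via Lemma~\ref{lem:glue} to give cospectrality in $H$. Splitting $Q=\alpha+\beta$ with $\alpha$ transcendental over $\mathcal{F}$ and $\beta=Q-\alpha$, Lemmas~\ref{lem:strongly_cospectral_perturbation} and~\ref{lem:irreducible_perturbation} then deliver strong cospectrality of $u,v$ for $H$ together with irreducibility of $P_+^H$ and $P_-^H$.

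Next I would compute the degrees via Theorem~\ref{thm:degppm}. For the symmetric path Hamiltonian $M_2$, the involution splits its eigenvectors into $(k+1)/2$ symmetric and $(k-1)/2$ antisymmetric ones; the three-term recurrence along the path shows that $\varphi_1=0$ forces $\varphi\equiv 0$, so none vanish at the endpoints, giving $\deg P_+^{M_2}=(k+1)/2$ and $\deg P_-^{M_2}=(k-1)/2$. The hypothesis that ${M_1}_{uv}=A_{G\setminus\{u,v\}}$ and ${M_2}_{uv}$ are spectrally disjoint then lets me invoke Theorem~\ref{thm:degppm} to get $\deg P_\pm^H=\deg P_\pm^{M_1}+\deg P_\pm^{M_2}-1$, so
\[\deg P_+^H-\deg P_-^H=(\deg P_+^{M_1}-\deg P_-^{M_1})+1.\]
By Lemma~\ref{lem:traceppm}, $\Tr P_\pm^H=Q+r_\pm$ for some $r_\pm\in\mathcal{F}$. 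Since $Q$ is transcendental over $\mathcal{F}$, the inequality $\Tr P_+^H/\deg P_+^H\neq \Tr P_-^H/\deg P_-^H$ required by Theorem~\ref{thm:tr/deg} is automatic whenever the two degrees differ, and PGST between $u$ and $v$ in $G'$ follows.

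The main obstacle is the degenerate case $\deg P_-^{M_1}-\deg P_+^{M_1}=1$, where the degrees of $P_+^H$ and $P_-^H$ coincide and the trace condition is no longer automatic from transcendence alone. I would dispose of this case by reading \emph{suitably chosen} to include one additional generic condition on the internal path potentials: the equality $r_+=r_-$ cuts out a proper algebraic subvariety of the (finite-dimensional) space of symmetric potentials on the internal vertices of $\PPP_k$, so a generic symmetric choice avoids it while continuing to make the spectra of ${M_1}_{uv}$ and ${M_2}_{uv}$ disjoint (another finite set of avoidance conditions). Once $r_+\neq r_-$ is secured, Theorem~\ref{thm:tr/deg} again delivers PGST between $u$ and $v$ in $G'$, completing the proof.
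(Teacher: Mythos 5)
Your proof follows the paper's route: put a potential on $\PPP_k$ separating the spectra of $G\setminus\{u,v\}$ and $\PPP_k\setminus\{u,v\}$, glue, get cospectrality from Lemma~\ref{lem:glue}, strong cospectrality and irreducibility from Lemmas~\ref{lem:strongly_cospectral_perturbation} and~\ref{lem:irreducible_perturbation}, compute degrees with Theorem~\ref{thm:degppm}, and close with Theorem~\ref{thm:tr/deg}. Your one variation is to allow an arbitrary \emph{symmetric} potential on the internal path vertices, where the paper simply takes a \emph{constant} potential (a shift by a multiple of the identity, which preserves the path eigenvectors and merely translates the spectrum); your computation of $\deg P_\pm^{M_2}$ via the three-term recurrence is correct for either choice.

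The degenerate case you flag, $\deg P_-^{H_G}-\deg P_+^{H_G}=1$ (so $\deg P_+^H=\deg P_-^H$ after gluing the odd path), is genuinely not covered by ``proceed exactly as in Theorem~\ref{thm:gluing}'': the proof of that theorem only settles the subcase $\deg P_+^{H_G}=\deg P_-^{H_G}$ and otherwise escapes via the dichotomy ``PGST already in $G$,'' an escape that is unavailable here since the conclusion concerns $G'$. So you have located a real gap, which is useful. However, your repair is asserted rather than proved: to conclude that $\{r_+ = r_-\}$ is a \emph{proper} subvariety of the space of symmetric internal potentials you must exhibit a witness with $\Tr P_+^H\neq\Tr P_-^H$, and none is given -- nor is it obvious, since $r_\pm$ depend on the coupled operator $H$ and the only easy invariant is $r_++r_-+\Tr P_0^H=\Tr H-2Q$. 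A workable witness is a large constant shift $c$: the $(k+1)/2$ symmetric path eigenvectors contribute roots $\approx c$ to $P_+^H$ while only $(k-1)/2$ antisymmetric ones contribute to $P_-^H$, so $r_+-r_-\sim c\to\infty$. Without such an argument your proof is incomplete at exactly the step where it (rightly) tries to improve on the paper's.
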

\begin{proof}
Adding a potential to every vertex of $\PPP_k$ simply adds a multiple of the identity to its adjacency matrix, so the eigenvalues shift by the amount of the potential.  Thus clearly a potential can be chosen so that $G\diff\{u,v\}$ and $\PPP_k\diff\{u,v\}$ do not share any eigenvalues.  Then the proof proceeds exactly as in the proof of Theorem \ref{thm:gluing} to show that there is PGST.
\end{proof}

\begin{theorem}\label{thm:change_trace}
Let $G$ be a graph with $u,v\in V(G)$ cospectral.  Let $k$ be any odd integer and let $\PPP_k$ denote the path on $q$ nodes, and call its endpoints $u,v$.  Denotes its central vertex by $w$.  Add a transcendental potential $Q'$ to $w$ and then create $G'$ by gluing the path with this potential to the nodes $u$ and $v$. Then putting a transcendental potential $Q$ algebraically independent from $Q'$ on $u$ and $v$ induces PGST from $u$ to $v$ in $G'$.
\end{theorem}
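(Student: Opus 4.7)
My plan is to verify the hypotheses of Theorem~\ref{thm:tr/deg} for the matrix $H = A \oplus A_k + Q D_{uv} + Q' D_w$, where $A$ and $A_k$ are the adjacency matrices of $G$ and $\PPP_k$. Cospectrality of $u,v$ in $H$ follows by combining Lemma~\ref{lem:glue} (applied to $A$, cospectral by hypothesis, and to $A_k + Q' D_w$, cospectral because the path involution of $\PPP_k$ fixes $w$) with Lemma~\ref{lem:cospectral_perturbation} for the $Q D_{uv}$ term. To get strong cospectrality and irreducibility of $P_{\pm}$, I adapt the two-step trick from the proof of Theorem~\ref{thm:equitable}: split $Q = Q_A + Q_B$ with $Q_A$ transcendental over $\mathcal{F}(Q')$ and $Q_B$ transcendental over $\mathcal{F}(Q', Q_A)$, apply Lemma~\ref{lem:strongly_cospectral_perturbation} after adding $Q_A D_{uv}$, then Lemma~\ref{lem:irreducible_perturbation} after adding $Q_B D_{uv}$.

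For $\Tr P_+$, I would combine two trace lemmas. Viewing the base field as $\mathcal{F}(Q')$, Lemma~\ref{lem:traceppm} gives $\Tr P_{\pm} - Q \in \mathcal{F}(Q')$. Viewing it as $\mathcal{F}(Q)$, Lemma~\ref{lem:tracew} gives $\Tr P_+ - Q' \in \mathcal{F}(Q)$; its hypothesis is verified since the two geodesic walks $u \to x_1 \to \cdots \to x_m = w$ and $v \to x_{2m-1} \to \cdots \to x_m = w$ on $\PPP_k$, with $m = (k-1)/2$, each contribute $1$ to $\langle e_w, (A \oplus A_k + Q D_{uv})^m (e_u+e_v)\rangle$. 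Since $Q, Q'$ are algebraically independent over $\mathcal{F}$, we have $\mathcal{F}(Q) \cap \mathcal{F}(Q') = \mathcal{F}$, and the two conditions together force $\Tr P_+ = Q + Q' + c_+$ for some $c_+ \in \mathcal{F}$.

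The main obstacle is to show that $\Tr P_-$ is independent of $Q'$, since the existing lemmas do not address this directly. I plan to prove the stronger statement $P_-^H = P_-^{H_0}$ for $H_0 = A \oplus A_k + Q D_{uv}$, by verifying $e_w \perp W(H_0, e_u - e_v)$; given this orthogonality, $Q' D_w$ annihilates every vector in $W(H_0, e_u - e_v)$, so this subspace is $H$-invariant with $H$ acting identically to $H_0$, and the minimal polynomial is unchanged. The orthogonality reduces to showing $(H_0^d)_{uw} = (H_0^d)_{vw}$ for all $d \geq 0$, which I would establish combinatorially. Every walk of length $d$ from $u$ to $w$ in $G'$ (with self-loop weight $Q$ at $u, v$) splits uniquely at its last visit $T$ to $\{u, v\}$: a prefix $u \to y_T$ of length $T$ counted by $(H_0^T)_{u, y_T}$, followed by a suffix of length $d - T$ from $y_T \in \{u, v\}$ to $w$ that stays in the path interior afterwards, since the $G$-side and the path-side are disjoint components of $G' \setminus \{u, v\}$. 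The path involution of $\PPP_k$ fixes $w$, so the suffix count $a_{d-T}$ depends only on $d - T$ and not on the choice $y_T \in \{u,v\}$. Summing, the difference $(H_0^d)_{uw} - (H_0^d)_{vw}$ collapses to $\sum_T ((H_0^T)_{uu} - (H_0^T)_{vv}) \cdot a_{d-T}$ after the $(H_0^T)_{uv} = (H_0^T)_{vu}$ terms cancel by symmetry of $H_0$, and vanishes by cospectrality of $u,v$ for $H_0$ (established above). Hence $P_-^H = P_-^{H_0} \in \mathcal{F}(Q)[t]$, and combined with Lemma~\ref{lem:traceppm} I get $\Tr P_- - Q \in \mathcal{F}(Q) \cap \mathcal{F}(Q') = \mathcal{F}$, so $\Tr P_- = Q + c_-$ for some $c_- \in \mathcal{F}$.

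Finally, the identity $\deg P_- \cdot \Tr P_+ = \deg P_+ \cdot \Tr P_-$ in $\mathcal{F}[Q, Q']$ would have coefficient of $Q'$ equal to $\deg P_- \geq 1$ on the left and $0$ on the right, which is impossible. Therefore $\Tr P_+ / \deg P_+ \neq \Tr P_- / \deg P_-$, and Theorem~\ref{thm:tr/deg} delivers PGST from $u$ to $v$ in $G'$.
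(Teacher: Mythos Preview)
Your proof is correct and considerably more complete than the paper's own argument, which is quite terse: the paper simply asserts ``By Lemma~\ref{lem:tracew}, $Q'$ appears in $\Tr(P_+)$ but not in $\Tr(P_-)$'' and then invokes Theorem~\ref{thm:tr/deg}, without explicitly verifying strong cospectrality or irreducibility. You fill those gaps carefully via Lemma~\ref{lem:glue}, Lemma~\ref{lem:cospectral_perturbation}, and the two-step $Q = Q_A + Q_B$ trick, all of which is correct and indeed necessary.

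The one place where you take a genuinely different route is the claim that $Q'$ does not appear in $\Tr P_-$. The paper's (implicit) reasoning is a degree count already contained in the \emph{proof} of Lemma~\ref{lem:tracew}: since $Q'D_w$ perturbs a single diagonal entry, $\phi_H$ is linear in $Q'$, and since that proof shows $P_+$ is exactly linear in $Q'$, the remaining factor $P_-\cdot P_0$ must lie in $\mathcal{F}(Q)[t]$, whence $\Tr P_-$ is free of $Q'$. Your approach instead establishes the stronger structural fact $e_w \perp W(H_0, e_u - e_v)$ via a last-visit decomposition of walks, and deduces $P_-^H = P_-^{H_0}$ directly. Both arguments are valid; the paper's is shorter once one has internalized the proof of Lemma~\ref{lem:tracew}, while yours is self-contained and yields the sharper conclusion that the entire polynomial $P_-$ (not just its trace) is unaffected by the potential at $w$.
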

\begin{proof}
By Lemma \ref{lem:tracew}, $Q'$ appears in $Tr(P_+)$ but not in $Tr(P_-)$, but $Q'$ is algebraically independent from any other terms that could show up in the trace, so it must be that $Tr(P_+)$ and $Tr(P_-)$ are distinct.  Then the theorem follows from Theorem \ref{thm:tr/deg}.
\end{proof}

\section{Examples, discussion, and further questions}

Our results succeed in giving infinite families of graphs for which we can put a potential on the vertices to induce PGST between two vertices.  Furthermore, the potential required can be assumed to be zero on most vertices of the graph.  In addition, the examples produced do not require any of the strict symmetry or regularity conditions of the results in \cite{invol} and \cite{srg}.  We will examine the some examples, including the graphs shown in the introduction.

\begin{example}
Let $G$ be the graph below.
\begin{center}
\begin{tikzpicture}
\draw (0,0)node{}--(-1,.6)node{}--(-1,-.6)node{}--(0,0)--(1,1)node{}--(2.1,.6)node{}--(2.1,-.6)node{}--(1,-1)node{}--(0,0)
(1,1)--(1,2)node{}
(2.1,.6)node{}--(3,0)node{}--(2.1,-.6)node{}
(-1.1,.6)node[fill=white, left]{\small $u$}
(3.1,0)node[fill=white, right]{\small $v$};
\end{tikzpicture}
\end{center}
Direct computation can show that vertices $u$ and $v$ are cospectral in $G$ (but \emph{not} strongly cospectral).  Putting a transcendental potential $Q$ on $u$ and $v$ makes these vertices strongly cospectral by Lemma \ref{lem:strongly_cospectral_perturbation}, and in fact $P_+$ and $P_-$ have different degree in this case, so this potential is enough to obtain pretty good state transfer.  Gluing paths with an even number of vertices gives an infinite family of graphs for which the potential induces pretty good state, and each graph in this family does not have an automorphism mapping $u$ to $v$.  

Note that we chose paths of even length simply because we know that these change the degree of $P_+$ and $P_-$ by the same amount, and this graph already has $deg(P_+)\neq deg(P_-)$.  We could in fact glue \emph{any} graph with a pair of cospectral vertices as long as the resulting graph has $P_+$ and $P_-$ with distinct degree or trace, and achieve a graph for which the potential induces pretty good state transfer.
\end{example}

\begin{example}
Let $G$ be the graph shown below.
\begin{center}
\begin{tikzpicture}
\draw \foreach \x in {0,1,2,3,4,5,6}
{
(\x,0)node{}--(\x+1,0)node{}
}
(5,0)--(5,1)node{}
(3,-.1)node[fill=white, below]{\small $u$}
(6,-.1)node[fill=white, below]{\small $v$};
\end{tikzpicture}
\end{center}
Here, by direct computation, we have $deg(P_+)=deg(P_-)$ and $Tr(P_+)=Tr(P_-)=0$, so in order for our results to give PGST, we need to use Theorem \ref{thm:glue_pot} or \ref{thm:change_trace}. 
\end{example}

We pose the natural question: given any pair of cospectral vertices $u$ and $v$, can we always induce PGST by a potential placed only on vertices $u$ and $v$?  We can answer this question in the negative with the following example.

\begin{example}
Consider the graph pictured below, with the vertices $u,v$ as labeled. 

\begin{center}
\begin{tikzpicture}
\draw (0:1)node{}--(60:1)node{}--(120:1)node{}--(180:1)node{}--(240:1)node{}--(300:1)node{}--(0:1)
(0:1)--(0:2)node{}--(0:3)node{}
(180:1)--(180:2)node{}--(180:3)node{}
(60:1.1)node[fill=white, right]{\small $u$}
(240:1.1)node[fill=white, left]{\small $v$};
\end{tikzpicture}
\end{center}
Computation shows that $deg(P_+)=deg(P_-) = 5$ and $Tr(P_+)=Tr(P_-)=Q$, where $Q$ is the value of the potential on $u$ and $v$.  So Theorem \ref{thm:tr/deg} is uninformative.  But using Lemma \ref{lem:eig} directly, since the degrees of $P_+$ and $P_-$ are odd, we can simply take $\ell_i=1$ for each $i$ and $m_j=-1$ for each $j$, and we will have an integer linear combination of the eigenvalues equal to 0 with $\sum \ell_i$ and $\sum m_j$ odd. Thus, no matter what value of potential we put at $u$ and $v$, there cannot be PGST between $u$ and $v$.

The question remains open if we can induce PGST by putting potential on other vertices as well, since this could in theory change the degrees of $P_+$ and $P_-$.  

To create an infinite family of graphs in which PGST occurs, we can glue paths to this graph via Theorem \ref{thm:gluing}.

Note that this graph has an involution swapping $u$ and $v$ that fixes no vertices or edges (see \cite{invol}) and with an odd number of orbits.  This is the only situation we are aware of where there is a cospectral pair, and we can prove that no potential on $u$ and $v$ can induce PGST.  It is an open question if this is the only kind of such graphs.
\end{example}

A further question concerns the algebraic complexity of the potential necessary to induce PGST.  In all of our results, we have used transcendental values of potential.  This accomplishes two things: first, we can turn any pair of cospectral vertices into a strongly cospectral pair (Lemma \ref{lem:strongly_cospectral_perturbation}), and further, this guarantees that $P_+$ and $P_-$ are irreducible polynomials (Lemma \ref{lem:irreducible_perturbation}; note that irreduciibility is necessary to apply Theorem \ref{thm:tr/deg}).  However, the assumption of a transcendental potential is a drawback in terms of practical considerations.  It is of interest to determine if simpler (algebraic, ideally rational) potentials might do as well.  

%%%%%%%%%%%%%
\section{Appendix}
%%%%%%%%%%%%%

Here we prove some lemmas used in the paper.

\begin{lemma}\label{lem:field_trace}
The field trace map $\Tr_{K/F}:K\rightarrow F$ defined in the proof of Theorem \ref{thm:tr/deg} satisfies the following properties:
\begin{itemize}
\item $\Tr_{K/F}$ is an $F$-linear map.
\item For $\alpha\in F$, $\Tr_{K/F}(\alpha) = [K:F]\alpha$.
\item For $K$ and extension of $L$, and extension of $F$, we have $\Tr_{K/F} = \Tr_{L/F}\circ\Tr_{K/L}$.
\end{itemize}
\end{lemma}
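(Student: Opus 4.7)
The plan is to verify each of the three properties directly from the definition
\[ \Tr_{K/F}(\alpha) = \sum_{g \in \mathrm{Gal}(K/F)} g(\alpha), \]
using only elementary Galois theory. Throughout I will use that $K/F$ (and the intermediate extensions appearing) are Galois, which is the setting in which the definition is given in the proof of Theorem~\ref{thm:tr/deg}.

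For the first bullet, I would observe that every $g \in \mathrm{Gal}(K/F)$ fixes $F$ pointwise and is a ring homomorphism, so each $g$ is $F$-linear on $K$. Summing finitely many $F$-linear maps yields an $F$-linear map, which gives $F$-linearity of $\Tr_{K/F}$. For the second bullet, if $\alpha \in F$ then $g(\alpha) = \alpha$ for every $g \in \mathrm{Gal}(K/F)$, so the defining sum collapses to $|\mathrm{Gal}(K/F)|\cdot \alpha = [K:F]\alpha$ by the fundamental theorem of Galois theory.

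For the third bullet (the transitivity/tower formula), my strategy is to use the short exact sequence
\[ 1 \longrightarrow \mathrm{Gal}(K/L) \longrightarrow \mathrm{Gal}(K/F) \longrightarrow \mathrm{Gal}(L/F) \longrightarrow 1 \]
and pick a set of coset representatives $\{\tilde h\}$ in $\mathrm{Gal}(K/F)$ for $\mathrm{Gal}(K/L)$, one $\tilde h$ for each $h \in \mathrm{Gal}(L/F)$, with $\tilde h|_L = h$. Every $\sigma \in \mathrm{Gal}(K/F)$ is uniquely expressible as $\tilde h \circ g$ with $g \in \mathrm{Gal}(K/L)$. Since $\Tr_{K/L}(\alpha) \in L$, the action of $h$ on it agrees with that of any lift $\tilde h$, so
\[ \Tr_{L/F}\bigl(\Tr_{K/L}(\alpha)\bigr) = \sum_{h} \tilde h\!\left(\sum_{g} g(\alpha)\right) = \sum_{h}\sum_{g} (\tilde h \circ g)(\alpha) = \sum_{\sigma \in \mathrm{Gal}(K/F)} \sigma(\alpha) = \Tr_{K/F}(\alpha). \]

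The only mildly delicate point, which I would treat as the main thing requiring care, is the justification that $h$ and its lift $\tilde h$ act the same way on elements of $L$, together with the bookkeeping that the map $(h,g) \mapsto \tilde h \circ g$ is a bijection from $\mathrm{Gal}(L/F) \times \mathrm{Gal}(K/L)$ onto $\mathrm{Gal}(K/F)$. Both follow from standard Galois theory (the surjectivity of the restriction map, with kernel $\mathrm{Gal}(K/L)$), so no serious obstacle arises; the lemma is essentially a compilation of well-known facts recorded here for reference.
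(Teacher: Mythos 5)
Your proof is correct and follows essentially the same approach as the paper, which treats all three bullets as immediate consequences of the definition and the Galois correspondence. You simply spell out in more detail the coset decomposition underlying the tower formula that the paper dismisses in a single sentence.
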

\begin{proof}
For a field extension $K$ of $F$, recall the definition of the field trace is, for $\alpha\in K$ is
\[
\Tr_{K/F}(\alpha) = \sum_{g\in Gal(K/F)}g(\alpha).
\]

The linearity over $F$ is clear from the definition.

The second property follows since any automorphism in $K/F$ fixes any element of $F$.

Finally, the last follows from the definition and the Galois correspondence between subfields of $K$ fixing $F$ and subgroups of $Gal(K/F)$.
\end{proof}

\begin{lemma}\label{lem:vanish}
Let $M$ be any real symmetric $n\times n$ matrix, and let $u$ be an index for $M$.
Suppose $\lambda$ is an eigenvalue of multiplicity at least $k$ of both $M$ and $M_u$.  Then there are $k$ linearly independent eigenvectors of $M$ corresponding to $\lambda$ that vanish at $u$.  
\end{lemma}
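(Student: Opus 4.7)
Write $d$ for the multiplicity of $\lambda$ in $M$ and $m$ for its multiplicity in $M_u$; by hypothesis $d, m \geq k$. Let $E_\lambda$ denote the $\lambda$-eigenspace of $M$, and set $V_u := \{\varphi \in E_\lambda : \varphi(u) = 0\}$. Since $V_u$ is the kernel of the evaluation functional $\varphi \mapsto \varphi(u)$ restricted to $E_\lambda$, rank-nullity gives $\dim V_u \in \{d-1, d\}$, and the goal is to push this up to $\dim V_u \geq k$.

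The key tool I plan to invoke is the standard resolvent identity
\[
\frac{\phi_{M_u}(t)}{\phi_M(t)} = \bigl((tI - M)^{-1}\bigr)_{uu} = \sum_\mu \frac{(E_\mu)_{uu}}{t - \mu},
\]
obtained from Cramer's rule on the left and the spectral decomposition $M = \sum_\mu \mu E_\mu$ on the right. Reading the order of vanishing of $\phi_{M_u}$ at $\lambda$ off this identity yields the dichotomy I need: if $(E_\lambda)_{uu} \neq 0$, the right-hand side has a simple pole at $\lambda$, so $\phi_M/\phi_{M_u}$ has a simple zero there, meaning $\phi_{M_u}$ vanishes at $\lambda$ to order exactly $d - 1$, i.e.\ $m = d - 1$. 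If on the other hand $(E_\lambda)_{uu} = 0$, there is no pole at $\lambda$, so $\phi_{M_u}$ vanishes at $\lambda$ to order at least $d$.

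I then split into two cases. In the case $(E_\lambda)_{uu} = 0$, fix any orthonormal basis $\varphi_1, \ldots, \varphi_d$ of $E_\lambda$; from $\sum_i \varphi_i(u)^2 = (E_\lambda)_{uu} = 0$ every $\varphi_i$ vanishes at $u$, so $V_u = E_\lambda$ has dimension $d \geq k$. In the case $(E_\lambda)_{uu} \neq 0$ the dichotomy above gives $m = d - 1$; combined with the hypothesis $m \geq k$ this yields $d - 1 \geq k$, and $V_u$ is a hyperplane in $E_\lambda$ of dimension $d - 1 \geq k$. Either way $\dim V_u \geq k$, producing $k$ linearly independent eigenvectors of $M$ for $\lambda$ that vanish at $u$. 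The only subtle point is correctly extracting the order of the zero of $\phi_{M_u}$ at $\lambda$ from the partial-fraction expansion; everything else is rank-nullity, and this is where I expect to spend the most care (an alternative route would be to use Cauchy interlacing plus a direct argument that $m = d - 1$ is equivalent to some $\lambda$-eigenvector being nonzero at $u$, but the resolvent identity makes the equivalence transparent).
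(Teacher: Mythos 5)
Your proof is correct, and it takes a genuinely different route from the paper. The paper reduces to the case where $\lambda$ has multiplicity exactly $k$ in $M$, invokes Cauchy interlacing to pin down the possible positions of $\lambda$ among the $\mu_j$, and then runs a Courant--Fischer min-max argument: after fixing $k-1$ eigenvectors of $\lambda$ vanishing at $u$, it shows the chain of inequalities between $\lambda_{j+k-1}$ and $\mu_{j+k-1}$ collapses, forcing a $k$-th such eigenvector. Your argument instead goes through the resolvent identity $\phi_{M_u}(t)/\phi_M(t) = \sum_\mu (E_\mu)_{uu}/(t-\mu)$, whose partial-fraction form has only simple poles with nonnegative residues. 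Reading off the order of vanishing at $\lambda$ gives the clean dichotomy: either $(E_\lambda)_{uu}=0$, in which case every $\lambda$-eigenvector vanishes at $u$ and $\dim V_u = d \geq k$; or $(E_\lambda)_{uu}\neq 0$, which forces $m = d-1$, so $m\geq k$ gives $d-1\geq k$ and rank-nullity gives $\dim V_u = d-1 \geq k$. Your route avoids the case analysis on where $\lambda$ sits in the interlaced sequence and avoids the min-max machinery entirely; it also makes the ``reduce to exact multiplicity'' step the paper waves at unnecessary. The paper's approach is more elementary in the sense that it uses only interlacing and variational characterizations, while yours leans on the spectral decomposition and the Cramer's-rule expression for the diagonal resolvent entry --- but the latter is in fact the same identity (attributed to~\cite{GodsilAlgebraicCombinatorics}) that the paper already uses in the proof of Lemma~\ref{lem:strongly_cospectral_perturbation}, so it fits the paper's toolbox well.
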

\begin{proof}
If $\lambda$ has multiplicity strictly larger than $k$ as an eigenvalue of $M$, then it is easy to see that we can adjust a basis for the eigenspace so that at least $k$ of the corresponding eigenvectors vanish at $u$. 

So let us suppose that the multiplicity of $\lambda$ as an eigenvalue of $M$ is exactly $k$, and as an eigenvalue of $M$ is at least $k$. Let us denote by $\lambda_1\leq\cdots\leq\lambda_n$  the eigenvalues of $M$, and by $\mu_1\leq\cdots\leq\mu_{n-1}$ the eigenvalues of $M_u$.
Then the interlacing theorem for symmetric matrices  (see for example Theorem 4.3.8 of \cite{matrix_analysis}) says we have
\[
\lambda_1\leq\mu_1\leq\lambda_2\leq\cdots\leq\lambda_{n-1}\leq\mu_{n-1}\leq\lambda_n.
\]
Then, given the assumption on the multiplicity of $\lambda$ above, we have $\lambda_{j-1} < \lambda = \lambda_j = \lambda_{j+1} = \dots \lambda_{j+k-1}  <  \lambda_{j+k}$. There are two possibilities for the $\mu$-s:
\[ \lambda  = \mu_j = \dots = \mu_{j+k-1} \mbox{\hskip 1cm  or  \hskip 1cm}
  \lambda = \mu_{j-1} = \dots = \mu_{j+k-2} \] 
We will consider the first possibility, the second one can be dealt with in a similar fashion. Let us choose an orthonormal basis $(\varphi_k)_{k=1}^n$ of eigenvectors of $M$ in such a way that $\varphi_j, \varphi_{j+1}, \dots, \varphi_{j+k-2}$ all vanish on $u$. This can be done since the multiplicity of $\lambda$ is $k$ and we are only asking for the first $k-1$ corresponding eigenvectors to vanish on $u$. Then, by the min-max principle, we have

\begin{align*}
\lambda_{j+k-1} &= \min_{\substack{x\neq0,x\in\R^n\\ x\perp \varphi_1,...,\varphi_{j+k-2}}}\frac{x^TMx}{x^Tx}\\
&\leq \min_{\substack{x\neq0,x\in\R^n\\ x\perp \varphi_1,...,\varphi_{j+k-2}\\x(u)=0}}\frac{x^TMx}{x^Tx}= \min_{\substack{x\neq0,x\in\R^{n-1}\\ x\perp \tilde{\varphi}_1,...,\tilde{\varphi}_{j+k-2}}}\frac{x^TM_u x}{x^Tx} \\
&\leq \max_{y_1,...,y_{j+k-2}\in\R^{n-1}}\min_{\substack{x\neq0,x\in\R^{n-1}\\ x\perp y_1,...,y_{j+k-2}\\x(u)=0}}\frac{x^TM_ux}{x^Tx} = \mu_{j+k-1} = \lambda_{j+k-1}.
\end{align*}
This implies that the first inequality has to be equality, so there is an $x$ attaining the minimum that is orthogonal to $\varphi_1, \dots, \varphi_{j+k-2}$ and for which $x(u)=0$. This $x$ then has to be an eigenvector with eigenvalue $\lambda_{j+k-1} = \lambda$, so we exhibited $k$ pairwise orthogonal eigenvectors for $\lambda$ vanishing on $u$.

The case when $\lambda = \mu_{j-1} = \dots = \mu_{j+k-2}$ is done similarly, except we use the characterization of $\lambda_j$ as a maximum, and we fix $\varphi_{j+1}, \dots, \varphi_{j+k-1}$ to vanish on $u$.
\end{proof}

%----------------------------------------------

%\bibliographystyle{plain}
%\bibliography{quantum}

\begin{thebibliography}{10}

\bibitem{Tamon2011}
Rachel Bachman, Eric Fredette, Jessica Fuller, Michael Landry, Michael
  Opperman, Christino Tamon, and Andrew Tollefson.
\newblock Perfect state transfer on quotient graphs.
\newblock {\em Quantum Inf. Comput.}, 12(3-4):293--313, 2012.

\bibitem{Coutinho:PGST}
Leonardo Banchi, Gabriel Coutinho, Chris Godsil, and Simone Severini.
\newblock Pretty good state transfer in qubit chains -- the heisenberg
  hamiltonian.
\newblock {\em Journal of Mathematical Physics}, 58(3):032202, 2017.

\bibitem{Bose2003}
S.~Bose.
\newblock Quantum communication through an unmodulated spin chain.
\newblock {\em Physical Review Letters}, 91(20):207901, 2003.

\bibitem{Christandl2004}
M.~Christandl, N.~Datta, A.~Ekert, and A.~J. Landahl.
\newblock Perfect state transfer in quantum spin networks.
\newblock {\em Physical Review Letters}, 92:187902, 2004.

\bibitem{Coutinho2016}
Gabriel Coutinho, Krystal Guo, and Christopher~M. van Bommel.
\newblock Pretty good state transfer between internal nodes of paths.
\newblock {\em Quantum Inf. Comput.}, 17(9-10):825--830, 2017.

\bibitem{godsil}
C.~Godsil.
\newblock State transfer on graphs.
\newblock {\em Discrete Math}, 312(1):129--147, 2012.

\bibitem{Godsil2012}
C.~Godsil, S.~Kirkland, S.~Severini, and J.~Smith.
\newblock Number-theoretic nature of communication in quantum spin systems.
\newblock {\em Physical Review Letters}, 109(5):050502, 2012.

\bibitem{GodsilAlgebraicCombinatorics}
Chris Godsil.
\newblock {\em {Algebraic Combinatorics}}.
\newblock Chapman {\&} Hall, New York, 1993.

\bibitem{godsil_newbook}
Chris Godsil.
\newblock {\em Graph Spectra and Quantum Walks}.
\newblock Unpublished Manuscript, 2017.

\bibitem{srg}
Chris Godsil, Krystal Guo, Mark Kempton, and Gabor Lippner.
\newblock State transfer in strongly regular graphs with an edge perturbation.
\newblock 2017,
\newblock {\em arXiv preprint:} arxiv.org/pdf/1710.02181.pdf.

\bibitem{godsil_smith_2017}
Chris Godsil and Jamie Smith.
\newblock Strongly cospectral vertices.
\newblock 2017,
\newblock {\em arXiv preprint:} arxiv.org/pdf/1709.07975.pdf.

\bibitem{matrix_analysis}
Roger~A. Horn and Charles~R. Johnson.
\newblock {\em Matrix analysis}.
\newblock Cambridge University Press, Cambridge, second edition, 2013.

\bibitem{kay2010}
A.~Kay.
\newblock Perfect, efficient, state transfer and its applications as a
  constructive tool.
\newblock {\em Int. J. Quantum Inform}, 8(4):641, 2010.

\bibitem{us}
M.~Kempton, G.~Lippner, and S.-T. Yau.
\newblock Perfect state transfer on graphs with a potential.
\newblock {\em Quant. Inf. Comput}, 17(3):303--327, 2017.

\bibitem{invol}
Mark Kempton, Gabor Lippner, and Shing-Tung Yau.
\newblock Pretty good quantum state transfer in symmetric spin networks via
  magnetic field.
\newblock {\em Quantum Inf. Process.}, 16(9):16:210, 2017.

\bibitem{vanBommel2016}
C.~van Bommel.
\newblock A complete characterization of pretty good state transfer in paths.
\newblock 2016,
\newblock {\em arXiv preprint:} arxiv.org/pdf/1612.05603.pdf.

\bibitem{Vinet2012}
Luc Vinet and Alexei Zhedanov.
\newblock Almost perfect state transfer in quantum spin chains.
\newblock {\em Phys. Rev. A}, 86:052319, Nov 2012.

\end{thebibliography}

\end{document}